\newtheorem{Thm}{Theorem}[section]
\newtheorem{Lem}[Thm]{Lemma}
\newtheorem{Prop}[Thm]{Proposition}
\newtheorem{Cor}[Thm]{Corollary}
\newtheorem{Def}[Thm] {Definition}
\newtheorem{Rem}[Thm] {Remark}
 \thanks{ $^{*}$School of Mathematical Sciences,
Peking University, Beijing 100871, China; 
}
 \keywords{Horseshoe; Shadowing lemma; Hyperbolic entropy }
\title{Horseshoes for  $\mathcal{C}^{1+\alpha}$ mappings with hyperbolic measures}
\thanks{2010 {\it Mathematics Subject Classification}.  37B40, 37D25,
37C40}
\date{Aug 8, 2014}
\author{Yun Yang$^{*}$}
\begin{document}

\maketitle
\begin{abstract}We present here a construction of  horseshoes for any $\mathcal{C}^{1+\alpha}$ mapping $f$ preserving an ergodic hyperbolic measure $\mu$ with $h_{\mu}(f)>0$ and then deduce that
 the exponential growth rate of the number of periodic points for any $\mathcal{C}^{1+\alpha}$ mapping $f$
is greater than or equal to $h_{\mu}(f)$.  We also prove that the exponential growth rate of the number of hyperbolic periodic points  is equal to the hyperbolic entropy. The hyperbolic entropy means the entropy resulting from hyperbolic measures. 
\end{abstract}
\section{Introduction}
In this paper, we build horseshoes for $\mathcal{C}^{1+\alpha}$ mappings (not necessarily invertible) preserving ergodic hyperbolic measures with positive measure-theoretical  entropy and then prove that the exponential growth rate of the number of periodic points 
is greater than or equal  to the measure-theoretical entropy.  
This research  is a  natural generalization
 of Katok's argument in his paper \cite{Kat}. We also prove that the exponential growth rate of the number of hyperbolic periodic points with ``large" Lyapunov exponents  is equal to hyperbolic entropy. Hyperbolic entropy means the entropy resulting from hyperbolic measures. 
 
Horseshoes are exhibited as examples of systems that demonstrate complicated dynamical behaviors and allow us to model the behavior by a shift map over a finite alphabet. 
Thus, it is an interesting problem to consider the existence of horseshoes.  Let $M$ be a compact  manifold of dimension 2 and $f:M\rightarrow M$ be a $\mathcal{C}^{1+\alpha}$ diffeomorphism with positive entropy.
 Katok's argument  illustrates the fact that  positive entropy implies the existence of  horseshoes  
and  the entropy of these inner horseshoes can approximate $h_{\mu}(f)$, which means the underlying horseshoes demonstrate nearly the same complicated property as the whole  systems.    One might expect Katok's argument to be true for endomorphsims with all Lyapunov exponents not zero.  

Gelfert\cite{Gelfert} proved the existence of horseshoe for mappings with only positive Lyapunov exponents under some integrability conditions that are used to control the effect of critical points and singular points. We give a generalization from the nonsingular case to  all Lyapunov exponents not zero, without integrability assumption on  critical points (Theorem \ref{main theorem}). Besides, we also control the Lyapunov exponents of periodic points in the horseshoe. 
After completing this paper, we came upon a paper by Y.M. Chung \cite{Chung} who dealt   with the same problem, but the starting point of our proof is shadowing lemma for sequences maps which is  different from the idea used in the proof of \cite{Chung}.  Also, \cite{Chung} does not give results on controlling the Lyapunov exponents of periodic points which is used in the proof of Theorem \ref{hyperbolic growth} in this paper.

Now let us state our main results. Let $M$ be closed $d-$dimensional Riemannian manifold.  
\begin{Def}For any continuous map $T$ on  metric space $N$,  {\bf the inverse limit space} $N^T$ of $(N, T)$ is the subset of $N^{\mathbb{Z}}$ consisting of all full orbits, i.e.
$$N^{T}=\{\tilde{x}=(x_i)_{i\in \mathbb{Z}}|x_i\in N, Tx_i=x_{i+1}, \forall i\in\mathbb{Z}\}.$$
\end{Def}
There exists a natural metric defined as 
$$d(\tilde{x},\tilde{y})=\sum_{i=-\infty}^{i=+\infty}\frac{d(x_i,y_i)}{2^{|i|}}.$$ Thus $N^T$ is a metric space with norm  satisfing $\max_{i}d(x_i,y_i)\geq d(\tilde{x},\tilde{y})\geq d(x_0,y_0)$.  Let $\tilde{T}$  be the shift map $\tilde{T}((x_i)_{i\in\mathbb{Z}})=(x_{i+1})_{i\in\mathbb{Z}}$  on $N^{T}$. From Lemma \ref{EM}, the set of invariant measures of $(\tilde{T}, N^T)$ and the set of invariant measures of $(T,N)$ are equivalent. Denote $\tilde{\mu}$ as the extension measure for $\mu$. This extension also keeps entropy, i.e. $h_{\tilde{\mu}}(\tilde{T})=h_{\mu}(T).$

\begin{Def}Fix a  continuous map $T$ on a metric space $N$. We say that $T:N\rightarrow N$ has a {\bf topological horseshoe} if there exists a $T$-invariant compact set $\Lambda$ such that the restriction of $T$ on $\Lambda$ is topologically conjugate to a subshift of finite type $\sigma:\mathcal{A}^{\mathbb{Z}}\rightarrow \mathcal{A}^{\mathbb{Z}}$. 
\end{Def}

\begin{Thm}\label{main theorem}
Let $f:M\rightarrow M$ be a $\mathcal{C}^{1+\alpha}$ mapping  preserving an ergodic
hyperbolic probability measure $\mu$ with entropy $h_{\mu}(f)>0$ and let $\tilde{\mu}$ be the extension measure of $\mu$ to the inverse limit space $M^f$.   For any constant $\delta>0$  and a weak $*$ neighborhood $\tilde{\mathcal{V}}$ of $\tilde{\mu}$ in the space of $\tilde{f}$-invariant probability measures, there exists a horseshoe $\tilde{H}\subset M^f$ such that:
\begin{enumerate}
 \item $h_{top}(\tilde{H},\tilde{f})>h_{\tilde{\mu}}(\tilde{f})-\delta=h_{\mu}(f)-\delta$. 
\item if $\tilde{\lambda}_1>\tilde{\lambda}_2>\cdots>\tilde{\lambda}_k$ are the distinct Lyapunov exponents of $\mu$, with 
multiplicities $n_1,\cdots,n_k\geq 1$, denote $\tilde{\lambda}$ the same as before, then there exists a dominated splitting on $T_{\tilde{x}}M=\sqcup T_{\pi(\tilde{f}^n\tilde{x})}M, \tilde{x}\in \tilde{H}$ where $\sqcup$ means the disjoint union,
$$T_{\tilde{x}}M=E^u\oplus E^s,$$ and  there exists $N\geq1$ such that for each $i=1,2$ each $\tilde{x}\in\tilde{H}$ and each unit vector $v\in E^u(\pi(\tilde{x})), u\in E^s(\pi(\tilde{x}))$,
$$||Df^{-N}_{\pi(\tilde{x})}(v)||\leq \exp((-\tilde{\lambda}_i+\delta)N),$$$$||Df^{N}_{\pi(\tilde{x})}(u)||\leq \exp((-\tilde{\lambda}_i+\delta)N)$$
\item all the invariant probability measures supported on $\tilde{H}$ lies in $\mathcal{V}.$

\item $\tilde{H}$ is $\delta-$close to the support of $\tilde{\mu}$  in the Hausdorff distance.
\end{enumerate}

\end{Thm}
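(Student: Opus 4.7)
The plan is to adapt Katok's classical construction of horseshoes from ergodic hyperbolic measures, but carried out on the inverse limit $M^f$ so as to compensate for non-invertibility of $f$, and built on a shadowing lemma for \emph{sequences of maps} (the backward orbit $\tilde{x}=(x_i)_{i\le 0}$ gives a sequence $f_i := Df_{x_i}$ rather than iterates of a single diffeomorphism). First, I would push $\tilde{\mu}$ through the Oseledets/Pesin machinery on $M^f$: since $\mu$ is hyperbolic and ergodic, $\tilde{\mu}$-a.e. point admits an invariant splitting $T_{\pi(\tilde{x})}M=E^u(\tilde{x})\oplus E^s(\tilde{x})$ with the prescribed Lyapunov exponents, and the non-invertible Pesin theory provides Lyapunov charts along backward orbits in which $f$ looks uniformly hyperbolic with rates arbitrarily close to $\tilde{\lambda}_i$, up to a slowly varying tempered factor. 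Intersecting with a Pesin block $\tilde{\Lambda}_\ell\subset M^f$ of measure $>1-\eta$, we obtain uniform chart sizes, uniform angles between $E^u$ and $E^s$, and uniform hyperbolicity constants --- all fixed independently of the orbit once $\ell$ is chosen.

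Next, I would harvest many orbits from the entropy using Katok's approach. By Brin--Katok / Katok's entropy formula applied to $\tilde{f}$ (which preserves entropy from $f$), for any small $\varepsilon>0$ the number of $(n,\varepsilon)$-separated points in $\tilde{\Lambda}_\ell$ whose orbits spend a fraction close to $1$ of their time in $\tilde{\Lambda}_\ell$ is at least $\exp(n(h_\mu(f)-\delta/2))$ for infinitely many $n$. Passing to a Poincaré-return subsystem of $\tilde{\Lambda}_\ell$ under $\tilde{f}^n$, a pigeonhole argument on the entry/exit Lyapunov charts extracts a set of pseudo-orbit segments of length $n$ whose endpoints lie in a single small ball inside a fixed chart, still of cardinality $\ge \exp(n(h_\mu(f)-\delta))$. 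Closing these segments up yields a finite family of $\varepsilon$-pseudo-periodic orbits $(y_0,y_1,\dots,y_{n-1},y_0,\dots)$ whose constituent points all sit on the good Pesin block.

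Then I would invoke the shadowing lemma for sequences of hyperbolic maps (the ``starting point'' the author highlights): each pseudo-periodic orbit is uniformly shadowed by a genuine periodic orbit $\tilde{z}\in M^f$ of $\tilde{f}$, and the map sending symbolic sequences on the finite alphabet of segments to concatenated shadowing orbits gives an injection from $\mathcal{A}^{\mathbb{Z}}$ into $M^f$ conjugating the shift to $\tilde{f}^n$. The image $\tilde{H}$ is then a topological horseshoe; property (1) follows from $\#\mathcal{A}\ge \exp(n(h_\mu(f)-\delta))$, and property (2) follows because the shadowing orbits remain uniformly close, in the Lyapunov charts of $\tilde{\Lambda}_\ell$, to the original hyperbolic splitting, so the cone fields inherited from $E^u\oplus E^s$ are invariant under $Df^N$ for suitably large $N$ and yield the announced dominated splitting and the Lyapunov bounds. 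Property (4) is immediate since every symbol lies in an $\varepsilon$-neighborhood of $\mathrm{supp}(\tilde{\mu})$, and property (3) follows by noting that any invariant measure supported on $\tilde{H}$ is, by the shadowing distance, close in weak$^*$ to an average of $\tilde{\mu}$-generic Birkhoff sums along $\tilde{\Lambda}_\ell$, hence close to $\tilde{\mu}$.

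The main obstacle, as I see it, is the non-invertibility: the hyperbolic estimates, the Lyapunov charts, and in particular the \emph{unstable} direction $E^u$ all depend on the entire backward history $\tilde{x}$, not just on $\pi(\tilde{x})\in M$, so one cannot directly quote Katok's shadowing lemma and must prove a version for sequences of maps along varying backward orbits, while simultaneously ensuring that the shadowing periodic orbit in $M^f$ has Lyapunov exponents close to the $\tilde{\lambda}_i$ (needed for the downstream Theorem on hyperbolic growth). Keeping the chart sizes uniform while the chosen pseudo-orbits return to $\tilde{\Lambda}_\ell$ only sporadically, and controlling the weak$^*$ distance of \emph{every} invariant measure on $\tilde{H}$ rather than merely the ones coming from single periodic orbits, are the delicate bookkeeping points where Katok's original 2D diffeomorphism argument must be genuinely redone.
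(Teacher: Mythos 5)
Your proposal follows essentially the same route as the paper: lift to the inverse limit $M^f$, establish Pesin-type Lyapunov charts on uniformity blocks $\tilde{\Lambda}(\eta)$, use Katok's entropy argument via $(n,\rho)$-separated sets with prescribed Birkhoff averages and common return times, pigeonhole segments into a single small ball, concatenate to form $\varepsilon$-pseudo-orbits with jumps in the block, and invoke the shadowing lemma for sequences of mappings to code the horseshoe. You also correctly identify the central obstruction the paper addresses (the unstable direction depends on the backward history, forcing both the charts and the shadowing machinery to be phrased over $M^f$), so this is a faithful reconstruction of the argument rather than an alternative one.
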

\begin{Cor}Let $f:M\rightarrow M$ be a  $\mathcal{C}^{1+\alpha}$ mapping   preserving an ergodic
hyperbolic probability measure $\mu$.  We have 
$$\limsup_{n\rightarrow+\infty}\frac{1}{n}\log P_n(f)\geq h_{\mu}(f)$$
where $P_n(f)$ denotes the number of periodic points with  period $n$. 
\end{Cor}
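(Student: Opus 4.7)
The plan is to derive this as a direct application of Theorem \ref{main theorem} together with a standard counting fact for subshifts of finite type. Without loss of generality assume $h_\mu(f)>0$, since otherwise the inequality is vacuous.

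Fix an arbitrary $\delta>0$. By Theorem \ref{main theorem} applied with this $\delta$ (and any choice of weak-$*$ neighborhood), I obtain a compact $\tilde{f}$-invariant set $\tilde{H}\subset M^f$ such that $\tilde{f}|_{\tilde{H}}$ is topologically conjugate to a subshift of finite type $\sigma:\mathcal{A}^{\mathbb{Z}}\to\mathcal{A}^{\mathbb{Z}}$, and satisfying $h_{\mathrm{top}}(\tilde{H},\tilde{f})>h_\mu(f)-\delta$. For a subshift of finite type with transition matrix $A$, the number of periodic points of period $n$ equals $\mathrm{tr}(A^n)$, so
\[
\lim_{n\to\infty}\frac{1}{n}\log P_n(\tilde{f}|_{\tilde{H}})=\log\rho(A)=h_{\mathrm{top}}(\tilde{H},\tilde{f})>h_\mu(f)-\delta.
\]

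The remaining step is to transfer this lower bound from $\tilde{f}$-periodic points in $M^f$ to $f$-periodic points in $M$. Let $\pi:M^f\to M$ denote the projection $\tilde{x}\mapsto x_0$. If $\tilde{x}=(x_i)_{i\in\mathbb{Z}}$ satisfies $\tilde{f}^n\tilde{x}=\tilde{x}$, then $f^n x_0=x_0$, so $\pi$ sends period-$n$ points of $\tilde{f}$ to period-$n$ points of $f$. Moreover, this map is injective on periodic points: if $\tilde{x}$ has period $n$, then $x_i$ for $i\geq 0$ is determined by $x_0$ via forward iteration, while for $-n<i<0$ one has $x_i=x_{i+n}$, again determined by $x_0$. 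Hence $P_n(f)\geq P_n(\tilde{f}|_{\tilde{H}})$, and
\[
\limsup_{n\to\infty}\frac{1}{n}\log P_n(f)\geq h_{\mathrm{top}}(\tilde{H},\tilde{f})>h_\mu(f)-\delta.
\]
Letting $\delta\to 0^+$ yields the claim.

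There is no real obstacle here — the substantive content sits in Theorem \ref{main theorem}, which produces the horseshoe approximating the entropy from within. The only small verification is the injectivity of the projection $\pi$ on periodic orbits, which is immediate from the inverse-limit definition. The rest is the classical identification of entropy with the exponential growth rate of periodic points for subshifts of finite type.
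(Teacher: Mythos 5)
Your proposal is correct and follows essentially the same route as the paper's own (very terse) proof: invoke the horseshoe theorem, use the identification of topological entropy with the exponential growth rate of periodic points for subshifts of finite type, and project from $M^f$ down to $M$. The one point you spell out that the paper leaves implicit is the injectivity of $\pi$ on $\tilde{f}$-periodic orbits in the inverse limit, which is indeed the small bridge needed to convert the count in $M^f$ into a lower bound on $P_n(f)$.
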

Theorem \ref{hyperbolic growth} below is a generalization of a result by Chung and Hirayama \cite{Chung2}. They proved that the topological entropy of a $\mathcal{C}^{1+\alpha}$ surface diffeomorphism is given by the growth rate of the number of periodic points of saddle type.  We prove here that for any $\mathcal{C}^{1+\alpha}$ mappings on any dimensional manifold, the growth rate of the number of hyperbolic periodic points equals to  the entropy coming from hyperbolic measures, hyperbolic entropy (see Defnition \ref{hyperbolic entropy}).  

We  point out that there is a similar  result concerning  the topological pressure for diffeomorphisms in Gelfert and Wolf's paper \cite{GW2}.  They proved that, for $\mathcal{C}^{1+\alpha}$ diffeomorphisms,  topological pressure for potentials with only hyperbolic equilibrium states is totally determined by  the value of potentials on saddle periodic points with ``large" Lyapunov exponents. 
\begin{Def}\label{hyperbolic entropy}Let $f:M\rightarrow M$ be a  $\mathcal{C}^{1+\alpha}$ mapping.  Let $\mathcal{HM}$ be the set of hyperbolic ergodic invariant measures of $f$ and let  $H(f)=\sup_{\mu\in \mathcal{HM}}h_{\mu}(f).$ We call $H(f)$ the {\bf hyperbolic entropy} of $f$.
\end{Def}
For surface diffeomorphisms, all invariant measures with positive entropy are hyperbolic. So, hyperbolic entropy equals to topological entropy for surface diffeomorphisms. \begin{Thm}\label{hyperbolic growth}Let $f:M\rightarrow M$ be a  $\mathcal{C}^{1+\alpha}$ mapping on a closed Riemannian manifold $M$. We have 
$$\limsup_{n\rightarrow+\infty}\frac{1}{n}\log P_n(f)\geq H(f).$$
 Moreover,   we have $$\lim_{a\rightarrow 0^{+}}\lim_{K\rightarrow 0^{+}}\limsup_{n\rightarrow \infty}\frac{1}{n}\log \sharp PH(n,f, K,a)=H(f).$$
where $PH(n,f,K,a)$ means the number of  collection of periodic points with $n$ period and with uniform $(K,a)$-hyperbolicity (see Definition \ref{PH}). 
\end{Thm}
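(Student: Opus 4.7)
The first inequality follows immediately from the preceding Corollary by taking the supremum over $\mu\in\mathcal{HM}$: applied to each hyperbolic ergodic measure $\mu$, the Corollary gives $\limsup_n\tfrac{1}{n}\log P_n(f)\geq h_\mu(f)$.

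For the equality, the $\geq$ direction will be extracted from Theorem \ref{main theorem}. Fix $\varepsilon>0$ and a nearly maximizing measure $\mu\in\mathcal{HM}$ with $h_{\mu}(f)\geq H(f)-\varepsilon$, and let $\chi>0$ be the minimum of $|\tilde\lambda_i|$ over the Lyapunov spectrum of $\mu$. Given target parameters $K,a>0$, choose $\delta>0$ small compared with $K$, $a$, and $\chi$, and apply Theorem \ref{main theorem}; this produces a horseshoe $\tilde H\subset M^f$ conjugate to a subshift of finite type of topological entropy at least $h_{\mu}(f)-\delta$. Item (2) of that theorem furnishes a uniform dominated splitting on $\tilde H$ with contraction and expansion rates within $\delta$ of the Lyapunov spectrum of $\mu$, so every $\tilde f$-periodic orbit in $\tilde H$ projects to an $f$-periodic orbit whose quantitative hyperbolicity is strictly stronger than the $(K,a)$-hyperbolicity of Definition \ref{PH}. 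Counting the approximately $e^{(h_{\mu}(f)-\delta)n}$ periodic orbits of the subshift and noting that distinct periodic full orbits project injectively to $M$ (a periodic full orbit is determined by its projection) yields $\limsup_n\tfrac{1}{n}\log\sharp PH(n,f,K,a)\geq H(f)-2\varepsilon$. Taking the iterated limit in $K$ and $a$ and then letting $\varepsilon\to 0$ gives the $\geq$ direction.

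For the reverse inequality, pick sequences $K_j,a_j\to 0^+$ together with periodic orbits $p_n^{(j)}\in PH(n,f,K_j,a_j)$ realizing the double limit, and form the empirical measures $\nu_n^{(j)}=\tfrac{1}{n}\sum_{i=0}^{n-1}\delta_{f^{i}p_n^{(j)}}$. Any weak-$*$ accumulation measure $\nu$ is $f$-invariant, and the uniform $(K_j,a_j)$-hyperbolicity bounds, interpreted as pointwise estimates on Birkhoff averages of $\log\|Df|_{E^{s/u}}\|$, pass to the limit and force every ergodic component of $\nu$ to lie in $\mathcal{HM}$. A standard Katok-type counting inequality, applied on the Pesin block carrying these uniformly hyperbolic orbits, then bounds their exponential growth rate by $h_{\nu}(f)\leq H(f)$.

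The main obstacle is this last upper bound: one must verify that uniform $(K,a)$-hyperbolicity survives under weak-$*$ limits in the noninvertible setting, so that the limit measures are genuinely hyperbolic, and then produce a Katok-style counting estimate for the periodic orbits approximating them. The natural way to make both steps rigorous is to lift to the inverse limit $M^f$ and mimic, within the symbolic coding provided by the horseshoes of Theorem \ref{main theorem}, the classical periodic-orbit counting for uniformly hyperbolic systems.
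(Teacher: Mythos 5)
The first inequality and the lower bound $\geq H(f)$ in the equality are argued essentially as in the paper: the first follows by taking a supremum over $\mu\in\mathcal{HM}$ in the preceding Corollary, and the lower bound comes from the horseshoe of Theorem~\ref{main theorem}, whose item (2) makes all periodic orbits in $\tilde H$ uniformly $(K,a)$-hyperbolic for suitable $K,a$ and whose periodic full orbits project injectively to periodic points of $f$. That part of your proposal is fine.

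The upper bound is where there is a genuine gap, and it is the heart of the theorem. Your plan is to pick realizing periodic orbits $p_n^{(j)}$, form their empirical measures, extract a weak-$*$ limit $\nu$, and then invoke a ``Katok-type counting inequality'' to deduce that the growth rate is at most $h_\nu(f)\leq H(f)$. This cannot work as written, for two reasons. First, the empirical measure along a single periodic orbit has zero entropy, and a weak-$*$ limit of such measures carries no control on entropy whatsoever; so the object $\nu$ you produce, even if hyperbolic, does not constrain how many periodic orbits there are. Second, Katok's theorem is a \emph{lower} bound on periodic-point growth in terms of measure entropy; there is no general counting inequality bounding the number of $(n,\varepsilon)$-separated periodic orbits above by $e^{nh_\nu(f)}$ for a single chosen measure $\nu$.

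What the paper does instead, and what is actually needed, is the following. Fix $K,a$ and also the index $I$ (the dimension of the stable bundle); the separation into indices is essential, since periodic orbits of different index cannot sit inside a single uniformly hyperbolic set with a fixed dominated splitting. Take the closure $\overline{\bigcup_n PH(n,f,a,K,I)}$ in $M^f$. Because the $(K,a)$-hyperbolic splittings on periodic orbits are uniformly controlled, they extend continuously to the closure, which is therefore a compact $\tilde f$-invariant uniformly hyperbolic set, and in particular $\tilde f$ is expansive on it by the uniqueness part of the shadowing theorem. Consequently $\pi\big(PH(n,f,a,K,I)\big)$ is an $(n,\varepsilon)$-separated set for an $\varepsilon$ depending only on the expansiveness constant, and the exponential growth rate of its cardinality is bounded by the topological entropy of the compact set $\overline{\pi\big(\bigcup_n PH(n,f,a,K,I)\big)}$. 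The variational principle then produces an invariant measure $\mu$ on that compact uniformly hyperbolic set with $h_\mu(f)$ close to this topological entropy; since the set is uniformly hyperbolic, $\mu$ is a hyperbolic measure and $h_\mu(f)\leq H(f)$. Finally sum over the finitely many indices $I$ and take the limits in $K$ and $a$. It is this passage from periodic-orbit counts to \emph{topological} entropy of a compact uniformly hyperbolic set, followed by the variational principle, that your proposal is missing, and the empirical-measure device is not a substitute for it.
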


Now we give a short discussion about the main techniques we used in this paper. 
As stated before, the starting point of our proof of Theorem \ref{main theorem} is the shadowing lemma for sequences of mappings.
In Avila, Crovisier and Wilkinson's new paper\cite{ACW},  they give a compact proof of shadowing lemma using the shadowing lemma for sequences of  mappings and then they establish a direct way to find a horseshoe by coding  some special separated set directly. 
Inspired by their ideas, we  establish a shadowing property for extension map $\tilde{f}$ in the inverse limit space $M^{f}$ which  inherits many properties for the mapping $f$ 
 and then construct horseshoes in the inverse limit space. 


Finally, we end  this section with  a short note about  critical points.  The key issue  caused  by critical points is the switch of the unstable direction and the stable direction which will cancel the hyperbolicity.  Such occurs, for example, in  snap-back repellers.  
Such phenomenon highly conflicts with the existence of  absolutely continuous invariant measures (acim).  Thus there are many compositions,  which concern the existence of acim,   taking  critical points into account carefully (\cite{ABV,Led,BM} ).  Nevertheless, by Ma\~{n}e's multiplicative ergodic theorem, the derivatives along  the unstable directions of almost all orbits in a Pesin block are isomorphisms. 
So, in terms of  the shadowing lemma and the construction of horseshoes,  the only collapse may happen here is along the stable direction which does not affect the shadowing lemma.    



\section*{Acknowledgement}I  would like  to thank  Amie Wilkinson for fruitful conversations. She is not only a  research mentor  but also a spiritual mentor to me. I would also like to thank Zhihong Xia for useful conversations  in the preparation of this paper.   This work was done during my stay in The University of Chicago. I would like to thank  The University of Chicago for their hospitality and China Scholarship Council  for their financial support.
\section{Preliminaries}
\subsection{Inverse limit space}
First we give the definition of regular Anosov mappings to illustrate some differences between diffeomorphisms and  mappings in dynamical features.

\begin{Def}\cite{Przytycki}A regular  map $f\in \mathcal{C}^1
(M,M)$ is an {\bf Anosov mapping} if there exist constants $C>0,0<\lambda<1$
and a Riemannnian metric $<\cdot,\cdot>$ on $TM$ such that for every $f$-orbit $(x_n)_{n\in\mathbb{Z}}\in M^{f}$, there is a  splitting of 
$\sqcup^{+\infty}_{-\infty}T_{x_n}M=E^s_{(x_n)_{n\in\mathbb{Z}}}\oplus E^u_{(x_n)_{n\in\mathbb{Z}}}=\sqcup^{+\infty}_{-\infty}E^s_{x_n}\oplus E^u_{x_n}$ that is preserved by the 
derivative $Df$ and satisfies the conditions:
\begin{enumerate}
 \item $||Df^n(v)||\leq C \lambda^n||v||$, for $v\in E^s, n\geq 0$
\item $||Df^n(v)||\geq C^{-1} \lambda^{-n}||v||$, for $v\in E^u, n\geq 0$.
\end{enumerate}
\end{Def}
\begin{Rem}
It is noticeable that we do not ask for a splitting of the whole tangent bundle $TM=E^s\oplus E^u$.  It may happen that $E^u_{(x_n)_{n\in\mathbb{Z}}}\neq E^u_{(y_n)_{n\in\mathbb{Z}}}$ though $x_0=y_0$. There is a construction of a mapping that is close to an algebraic Anosov mapping while at a point it has many different local unstable manifolds in \cite{Przytycki}. This construction can also be reckoned as an explanation of the non stability of Anosov mappings. 
 For $E^s$, $E^s_{(x_n)_{n\in\mathbb{Z}}}$ only depends on $x_0$. Of course,
there are special systems for which $E^u_{x}$ does not depend on the orbits containing $x$. A classical example of such  mapping
is any algebraic mapping of the torus, such as $\left[\begin{matrix} n & 1 \\ 1 & 1 \\\end{matrix}\right]$ for $n\geq 2.$
\end{Rem}
The following theorem is the classical Oseledec's theorem, a version of the Multiplicative Ergodic Theorem for differentiable 
mappings.
 \begin{Thm}\label{Ose}Let $f$ be a $\mathcal{C}^1$ mapping on $M$. Then there exists a Borel subset $G\subset M$ with $f(G)\subset G$ 
and $\mu(G)=1$ for any $\mu\in \mathcal{M}_{inv}(M)$, such that the following properties hold.
\begin{enumerate}\item There is a measurable integer function $r:G\rightarrow \mathbb{Z}^+$ with $r\circ f=r.$
\item For any $x\in G$, there are real numbers
$$+\infty>\lambda_1(x)>\lambda_2(x)>\cdots>\lambda_{r(x)}(x)\geq-\infty,$$
where $\lambda_{r(x)}(x)$ could be $-\infty.$
\item If $x\in G,$ there are linear subspaces
$$V^0(x)=T_xM\supset V^1(x)\supset\cdots\supset V^{r(x)}(x)={0}$$
of $T_xM.$
\item If $x\in G$ and $1\leq i\leq r(x),$ then 
$$\lim_{n\rightarrow \infty}\frac{1}{n}\log |D_xf^n\xi|=\lambda_i(x)$$
for all $\xi\in V^{i-1}(x)\backslash V^i(x).$ Moreover,
$$\lim_{n\rightarrow \infty}\frac{1}{n}\log|\det (D_xf^n)|=\sum_{i=1}^{r(x)}\lambda_i(x)m_i(x),$$
where $m_i(x)=\dim V^{i-1}(x)-\dim V^i(x)$ for all $1\leq i\leq r(x)$.
\item $\lambda_i(x)$ is measurably defined on $\{x\in G|r(x)\geq i\}$ and $f$-invariant, i.e. $\lambda_i(fx)=\lambda_i(x).$
\item $D_xf(V^i(x))\subset V^i(f(x))$ if $i\geq 0$. 
\end{enumerate}
The numbers $\{\lambda_i(x)\}_{i=1}^{r(x)}$ defined above are called the Lyapunov exponents of $f$ at point $x$ and $m_i(x)$ is called the multiplicity of $\lambda_i(x)$.
\end{Thm}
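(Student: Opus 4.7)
The plan is to derive the statement from Kingman's subadditive ergodic theorem applied to the exterior powers of the derivative cocycle, in the non-invertible formulation that outputs a decreasing filtration rather than a direct-sum decomposition. Let $d = \dim M$. For each $k \in \{1, \ldots, d\}$ I would consider the cocycle $a_n^{(k)}(x) := \log \|\wedge^k D_x f^n\|$, whose subadditivity follows from the chain rule $D_x f^{n+m} = D_{f^n x} f^m \circ D_x f^n$ together with the submultiplicativity $\|\wedge^k(A \circ B)\| \leq \|\wedge^k A\|\cdot \|\wedge^k B\|$. Compactness of $M$ and $\mathcal{C}^1$-ness of $f$ bound $a_1^{(k)}$ from above, which is enough to invoke the general form of Kingman's theorem: for each $\mu \in \mathcal{M}_{inv}(M)$ it yields measurable $f$-invariant functions $\Lambda_k(x) = \lim_{n\to\infty} \tfrac{1}{n}\log\|\wedge^k D_x f^n\|$ on a set $G_\mu$ of full $\mu$-measure (allowing the value $-\infty$ in the non-invertible case). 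The distinct Lyapunov exponents $\lambda_1(x) > \cdots > \lambda_{r(x)}(x)$ are then recovered as the distinct slopes among the successive differences $\Lambda_k(x) - \Lambda_{k-1}(x)$ (with $\Lambda_0 \equiv 0$), and the multiplicity $m_i(x)$ counts how many consecutive $k$'s produce the same slope. Taking $G$ to be the intersection over a suitable countable family and then saturating forward yields a single Borel set with $f(G)\subset G$ and $\mu(G)=1$ for every invariant $\mu$.

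Next I would build the filtration $T_xM = V^0(x) \supset V^1(x) \supset \cdots \supset V^{r(x)}(x) = \{0\}$ by
$$V^i(x) := \Bigl\{\xi \in T_xM \;:\; \limsup_{n\to\infty} \tfrac{1}{n}\log\|D_x f^n \xi\| \leq \lambda_{i+1}(x)\Bigr\}$$
for $0 \leq i < r(x)$. Subadditivity of $\limsup$ under sums and its invariance under nonzero scaling make each $V^i(x)$ a linear subspace; the containment $D_x f(V^i(x)) \subset V^i(f(x))$ is immediate from the definition combined with the $f$-invariance of the $\lambda_i$. Measurability in $x$ follows from the joint measurability of $(x,\xi) \mapsto \|D_x f^n \xi\|$ together with a standard reduction of the defining condition to a countable intersection over rational thresholds, viewing the map $x \mapsto V^i(x)$ as Grassmannian-valued.

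The main obstacle, and the step I expect to require most care, is upgrading the $\limsup$ to a genuine limit on $V^{i-1}(x) \setminus V^i(x)$ and simultaneously verifying the dimension identity $\dim V^{i-1}(x) - \dim V^i(x) = m_i(x)$. For diffeomorphisms this is handled by polar-decomposing $D_x f^n$ and tracking singular vectors backward, but here $Df$ can be non-injective and that argument is unavailable. I would instead argue purely forward by comparing $\Lambda_{\dim V^{i-1}(x)}(x)$ with the volume growth of any top-dimensional frame in $V^{i-1}(x)$: if some $\xi \in V^{i-1}(x) \setminus V^i(x)$ grew strictly slower than $\lambda_i(x)$, completing it to a basis of $V^{i-1}(x)$ would produce a volume growing at rate strictly below $\Lambda_{\dim V^{i-1}(x)}(x)$, contradicting Kingman's conclusion for $\wedge^{\dim V^{i-1}(x)} Df^n$. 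The determinant formula in (4) emerges as the case $k=d$, and the $f$-invariance and measurability of $r$ and each $\lambda_i$ are inherited from the corresponding properties of the $\Lambda_k$ and the Grassmannian-valued flag, completing the proof.
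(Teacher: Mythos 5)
The paper does not prove Theorem~\ref{Ose}; it is cited there as the classical Multiplicative Ergodic Theorem for non-invertible maps, with Ruelle and Ma\~{n}e credited in a remark. So there is no in-paper proof to compare you against; I will instead evaluate your sketch on its own terms. Your overall route --- Kingman's subadditive ergodic theorem applied to the family $\log\|\wedge^k D_x f^n\|$, recovery of the exponents as successive differences $\Lambda_k-\Lambda_{k-1}$, and definition of the flag via $\limsup$ growth thresholds --- is the standard modern proof of the one-sided MET, and the easy parts (subadditivity, invariance, measurability of the exponents, linearity of $V^i(x)$, forward invariance $D_x f(V^i(x))\subset V^i(f(x))$) are all correctly dispatched.

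The gap is precisely in the step you flag as the hardest, and it is not merely a detail: the comparison quantity is wrong. You propose to derive a contradiction by saying that if some $\xi\in V^{i-1}(x)\setminus V^i(x)$ had $\liminf<\lambda_i(x)$, then a frame completing $\xi$ in $V^{i-1}(x)$ would give volume growth strictly below $\Lambda_{\dim V^{i-1}(x)}(x)$, contradicting Kingman for $\wedge^{\dim V^{i-1}(x)}Df^n$. But $\Lambda_m(x)$ is by definition $\lim\frac{1}{n}\log\|\wedge^m D_xf^n\|$, i.e.\ the sum of the $m$ \emph{largest} exponents (with multiplicity), realized on the fastest-growing $m$-plane. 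Your filtration subspace $V^{i-1}(x)$, consisting of vectors with $\limsup\le\lambda_i(x)$, is the \emph{slowly}-growing subspace; with $m=\dim V^{i-1}(x)$ its volume growth rate is (if anything) comparable to the sum of the $m$ \emph{smallest} exponents, which is $\Lambda_d(x)-\Lambda_{d-m}(x)$, not $\Lambda_m(x)$. Kingman's conclusion for $\wedge^m Df^n$ gives a \emph{supremum} over $m$-planes and places no lower bound on the volume growth inside $V^{i-1}(x)$, so no contradiction is obtained; the argument as written simply does not close.

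Even after replacing $\Lambda_{\dim V^{i-1}}$ by $\Lambda_d-\Lambda_{d-\dim V^{i-1}}$, the remaining step --- that the volume growth of any frame spanning $V^{i-1}(x)$ is bounded below by the sum of the bottom singular-value exponents along the \emph{full} sequence $n\to\infty$ --- is where the real content of Oseledec lies. It requires showing that $V^{i-1}(x)$ asymptotically aligns with the span of the bottom $\dim V^{i-1}(x)$ singular vectors of $D_xf^n$ and controlling the angle, which is what the singular-value/polar-decomposition analysis (or Raghunathan's or Ruelle's arguments) actually accomplishes and which your forward-only sketch does not supply. The dimension identity $\dim V^{i-1}(x)-\dim V^i(x)=m_i(x)$ has the same dependency and is not obtained for free from the corrected inequality. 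I would either fill in the singular-vector alignment argument explicitly, or simply cite the non-invertible MET (Ruelle, Ma\~{n}e, or the exposition in Qian--Xie--Zhu) as the paper itself does.
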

\begin{Rem}By  Oseledec's ergodic theorem \ref{Ose} for maps, we only have a filtration type splitting in the tangent space which kills lots of skills in Pesin theory.
Thus,
there only exist well defined stable manifolds for mappings. Nevertheless, it is comforting that by Pugh and Shub's theorem \ref{Oseledec} below, 
we can find  full measure orbits such that along these orbits, there exist well defined invariant unstable manifolds.  It is worth to note that what underlies this fact is the multiplicative ergodic theorem for non-invertible maps given by  Ruelle or Ma\~{n}e's \cite{Rue, Mane}.  
\end{Rem}

It is a common idea to consider the inverse limit space for mappings.  Let $\tilde{f}: M^f\rightarrow M^f$ be the induced map where $\tilde{f}$ is the shift
map.  Let $\pi:M^f\rightarrow M$ be naturally defined by $\pi((x_n)_{n\in\mathbb{Z}})=x_0$, then $\pi\circ \tilde{f}=f\circ\pi$. 
The tangent bundle of $M$, $TM$ pulls back to $TM^f$ on $M^f$ and $Df$ extends to $D\tilde{f}$, i.e.
$D\tilde{f}$ is a continuous bundle mapping covering the homeomorphism $\tilde{f}$ of the compact base $M^f$. $D\tilde{f}$ is a linear map on each fiber.  
\begin{Def}\label{Coc}Let $f$ be a $\mathcal{C}^{1+\alpha}$ mapping on compact manifold $M$ and $\tilde{f}$, $M^f$
defined as above. Let $L(d,\mathbb{R})$ denote the group of  $d\times d$ matrices over $\mathbb{R}^d$.  For any measurable function
$A:M^f\rightarrow L(d,\mathbb{R})$, let $\mathcal{A}:M^f\times\mathbb{Z}\rightarrow L(d,\mathbb{R})$ defined by
$$\mathcal{A}(\tilde{x},m)=A(\tilde{f}^m(\tilde{x}))\cdots A(\tilde{x})\,\, for\,\, m\geq0,$$
$$\mathcal{A}(\tilde{x},m)=A(\tilde{f}^m\tilde{x})^{-1}\cdots A(\tilde{f}^{-1}\tilde{x})^{-1}\,\, for\,\, m<0. $$
Then it follows  
\begin{equation}\label{cocycles}\mathcal{A}(\tilde{x},m+k)=\mathcal{A}(\tilde{f}^k(\tilde{x}),m)\mathcal{A}(\tilde{x},k).
\end{equation}
We call  $\mathcal{A}:M^f\times \mathbb{Z}\rightarrow L(d,\mathcal{R})$ 
 a {\bf measurable linear cocycle} over $f$, or simply a cocycle.
\end{Def}
Thus, there is a natural measurable cocycle over $f$ associated with  $D\tilde{f}$. We abuse notation $D\tilde{f}: M^f\times \mathbb{Z}\rightarrow L(d,\mathcal{R})$ defined as following.  
 \begin{Def}The measurable  cocycle $D\tilde{f}$  over $f$ is defined as following 
$$
D\tilde{f}^m(\tilde{x})=
\begin{cases}
D_{x_0}f^m=D_{x_0}f\circ\cdots\circ D_{x_m}f, & \text{if }m>0; \\
Id, & \text{if }m=0;\\
(D_{x_m}f^{m})=(Df_{x_m})^{-1}\circ\cdots\circ(Df_{x_{-1}})^{-1} & \text{if }m<0.
\end{cases}
$$
\end{Def}
\begin{Rem}We should notice that inverse limit space  isn't a manifold. It is just a topological space with linear cocycle $\tilde{D}f$ over it  and  the dimension
of $M^f$ is even infinite usually. Although we can not say $\tilde{D}f$ is the derivative to $\tilde{f}$, it is a linear cocycle over $\tilde{f}$.  
\end{Rem}

Invariant measures in $M^f$ can be projected down to invariant measures in $M$ by projection $\pi$. The following lemma 
says $\mathcal{M}_{inv}{M^f}$ is equivalent to $\mathcal{M}_{inv}{M}$.
\begin{Lem}\cite{MXZ}\label{EM}Let $T$ be a continuous map on $M$. For any $T$ -invariant Borel probability
 measure $\mu$ on $M$, there exists a unique $\tilde{T}$-invariant Borel probability measure $\tilde{\mu}$ on
$M^T$ such that $\pi\tilde{\mu}=\mu$. Moreover, $h_{\tilde{\mu}}(\tilde{T})=h_{\mu}T$. 
\end{Lem}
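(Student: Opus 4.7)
The plan is to build $\tilde{\mu}$ on cylinders and extend by Kolmogorov, then check entropy equality by approximating arbitrary partitions of $M^T$ via pullbacks of partitions of $M$.

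First I would define $\tilde{\mu}$ on the semi-algebra of cylinder sets. For $n_1 < n_2 < \cdots < n_k$ in $\mathbb{Z}$ and Borel sets $A_1,\dots,A_k\subset M$, set
$$\tilde{\mu}\bigl(\{\tilde{x}\in M^T : x_{n_j}\in A_j,\ j=1,\dots,k\}\bigr):=\mu\Bigl(A_1\cap T^{-(n_2-n_1)}A_2\cap\cdots\cap T^{-(n_k-n_1)}A_k\Bigr).$$
The right side depends only on the differences $n_{j+1}-n_j$ because $\mu$ is $T$-invariant, so the definition is consistent under shifting all indices, and hence is both shift-consistent (Kolmogorov compatibility) and $\tilde{T}$-invariant. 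A Carathéodory/Kolmogorov-type extension gives a unique Borel probability measure on $M^T$ agreeing with this on cylinders, and uniqueness of any lift with $\pi_*\tilde{\mu}=\mu$ follows because the cylinders generate $\mathcal{B}(M^T)$ and any $\tilde{T}$-invariant lift must agree with the above formula on them (using $\pi_{n_j}^{-1}=\tilde{T}^{-n_j}\pi^{-1}$).

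For the entropy equality, the inequality $h_{\tilde{\mu}}(\tilde{T})\ge h_{\mu}(T)$ is immediate from the general fact that a factor map cannot increase entropy: $\pi$ is a measure-preserving factor from $(M^T,\tilde{T},\tilde{\mu})$ to $(M,T,\mu)$. For the reverse inequality I would argue as follows. Fix a finite measurable partition $\eta$ of $M^T$ and $\epsilon>0$. Since the cylinder $\sigma$-algebra generates $\mathcal{B}(M^T)$, I can find a finite partition $\xi$ of $M$ and an integer $N\ge 0$ such that the pullback partition $\xi_N:=\bigvee_{i=-N}^{N}\pi_i^{-1}\xi$ satisfies $H_{\tilde{\mu}}(\eta\mid\xi_N)<\epsilon$. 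Then
$$h_{\tilde{\mu}}(\tilde{T},\eta)\le h_{\tilde{\mu}}(\tilde{T},\xi_N)+H_{\tilde{\mu}}(\eta\mid\xi_N)<h_{\tilde{\mu}}(\tilde{T},\xi_N)+\epsilon.$$
Because $\tilde{T}$ is invertible, $h_{\tilde{\mu}}(\tilde{T},\xi_N)=h_{\tilde{\mu}}(\tilde{T},\pi^{-1}\xi)$. A direct cylinder computation, using $\pi_i^{-1}=\tilde{T}^{-i}\pi^{-1}$ and $Tx_{-k}=x_{-k+1}$, shows
$$H_{\tilde{\mu}}\Bigl(\bigvee_{i=0}^{n-1}\tilde{T}^{-i}\pi^{-1}\xi\Bigr)=H_{\mu}\Bigl(\bigvee_{k=0}^{n-1}T^{-k}\xi\Bigr),$$
so $h_{\tilde{\mu}}(\tilde{T},\pi^{-1}\xi)=h_{\mu}(T,\xi)\le h_{\mu}(T)$. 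Taking supremum over $\eta$ and letting $\epsilon\to 0$ yields $h_{\tilde{\mu}}(\tilde{T})\le h_{\mu}(T)$.

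The main technical obstacle I expect is the approximation step, namely producing $\xi$ and $N$ with $H_{\tilde{\mu}}(\eta\mid\xi_N)<\epsilon$: one must combine the Borel regularity of $\tilde{\mu}$ with the fact that finite unions of cylinders are dense in $\mathcal{B}(M^T)$ in the symmetric-difference pseudometric induced by $\tilde{\mu}$, and then invoke the standard continuity of conditional entropy under this approximation. Everything else, including $\tilde{T}$-invariance of $\tilde{\mu}$ on cylinders and the identity $h_{\tilde{\mu}}(\tilde{T},\xi_N)=h_{\tilde{\mu}}(\tilde{T},\pi^{-1}\xi)$ (which uses invertibility of $\tilde{T}$ and the shift-invariance of the entropy of a partition under finite two-sided refinements), is a routine verification once the setup is in place.
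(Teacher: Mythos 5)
The paper does not prove this lemma; it is stated as a citation to \cite{MXZ} with no argument supplied. So there is no in-paper proof to compare against, and the proposal must be judged on its own.

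Your construction is the standard one and it is essentially correct. The cylinder formula is exactly what $\tilde{T}$-invariance plus $\pi_*\tilde{\mu}=\mu$ forces, $T$-invariance of $\mu$ makes it consistent under index shifts, Kolmogorov/Carath\'{e}odory gives the extension, and the entropy equality follows from the factor inequality in one direction and the standard approximation-by-cylinder-partitions argument in the other, with the clean identity $H_{\tilde{\mu}}\bigl(\bigvee_{i=0}^{n-1}\tilde{T}^{-i}\pi^{-1}\xi\bigr)=H_{\mu}\bigl(\bigvee_{k=0}^{n-1}T^{-k}\xi\bigr)$ and the fact that a two-sided finite refinement of a partition does not change entropy for an invertible map. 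The approximation step you flag as the main obstacle is indeed the only part requiring care, and the mechanism you describe (density of finite unions of cylinders in the $\tilde{\mu}$-symmetric-difference pseudometric, plus continuity of conditional entropy) is the right one.

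One small omission worth naming: the Kolmogorov extension naturally produces a Borel measure on the full product $M^{\mathbb{Z}}$, and you need to check that it is actually supported on the closed subset $M^T$. This follows from the cylinder formula for two adjacent coordinates: $\tilde{\mu}(\{x_n\in A,\ x_{n+1}\in B\})=\mu(A\cap T^{-1}B)$ is the pushforward of $\mu$ under $x\mapsto(x,Tx)$, which is carried by the graph of $T$, hence $\tilde{\mu}(\{\tilde{x}:Tx_n\neq x_{n+1}\})=0$ for each $n$ and so $\tilde{\mu}(M^T)=1$. Equivalently, since $M$ is compact and $T$ continuous, $M^T$ is a compact subset of $M^{\mathbb{Z}}$ and the premeasure on the semi-algebra of $M^T$-cylinders extends directly. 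Either way this should be made explicit, because it is where the inverse-limit structure (as opposed to a generic product) enters the argument.
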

\begin{Prop}\cite{Shub1}\label{Oseledec}For any invariant measure $\mu$ of $f:M\rightarrow M$, there exists a Borel set $\tilde{\Lambda}\subset M^f$, such that
\begin{enumerate} 
\item$\tilde{f} \tilde{\Lambda}
=\tilde{\Lambda}$,
\item $\tilde{\mu}(\tilde{\Lambda})=1$
 \item for every $\tilde{x}=\{x_n\}_{n\in \mathbb{Z}
}\in\tilde{\Lambda}$, there are  splittings of the tangent space $T_{x_n}M$,
$$T_{x_n}M=E_1(x_n)\oplus E_2(x_n)\oplus\cdots\oplus E_{\tilde{r}(\tilde{x})}(x_n)\oplus F_{\infty}(x_n)$$
and numbers $\infty>\tilde{\lambda}_1(\tilde{x})\geq\tilde{\lambda}_s(\tilde{x})\geq\cdots\geq\tilde{ \lambda}_{\tilde{r}(\tilde{x})}
(\tilde{x})>-\infty$ and $\tilde{m}_i(\tilde{x})$, satisfying the following properties:
\begin{enumerate}
\item $D_{x_n}f|E_i(x_n)$ is an isomorphism, $\forall n\in \mathbb{Z}$.
\item $\tilde{r}(\cdot),\tilde{\lambda}(\cdot)$ and $\tilde{m}(\cdot)$ are $\tilde{f}$ measurable and invariant, i.e.
$$\tilde{r}(\tilde{f}(\tilde{x}))=\tilde{r}(\tilde{x}),\,\,\tilde{\lambda}_i(\tilde{f}(\tilde{x}))=\tilde{\lambda}_i(\tilde{x})\,\, and\,\, \tilde{m}_i(\tilde{f}(\tilde{x}))$$
for each $i=1,\cdots,\tilde{r}(\tilde{x}).$
\item $\text{dim}E_i(\tilde{x})=\tilde{m}_i(\tilde{x})$ for all $n\in\mathbb{Z}$ and $1\leq i\leq \tilde{r}(\tilde{x}).$
\item $$\lim_{n\rightarrow +\infty}\frac{1}{n}\log |\tilde{D}f^n|F_{\infty}(x_n)|=-\infty.$$
\item $$\lim_{n\rightarrow \pm\infty}\frac{1}{n}\log |\tilde{D}f^n(v)|=\tilde{\lambda}_i(\tilde{x}),$$ for all $0\neq v\in E_{i}(x_n), 1\leq i\leq \tilde{r}(\tilde{x}).$
\item If $0\neq v_n\in F_{\infty}(x_n)$ and there are $v_m\in F_{\infty}(x_m)$ for $m<n$ such that $Df^{n-m}(v_m)=v_n$ then $\lim_{n\rightarrow \infty}\log |v_j|=\infty.$
\item $F_{\infty}(x_n)=K(x_n)\oplus G_{\infty}(x_n)$ where $D_{x_n}f^{i}|_{K(x_n)}$ is identically 0 for some $i$ and $Df|_{G_{\infty}(x_i)}$ is an
isomorphism.
\item The splitting is  measurable with respect to $\tilde{x}$  and the angles between any two associated subspaces vary sub-exponentially
under iteration, i.e. 
$$\lim_{n\rightarrow \pm \infty}\frac{1}{n}\angle(E_i(x_n),E_j(x_n))=0,1\leq i,j\leq\tilde{r}(\tilde{x}) \text{ and}$$ $$\lim_{n\rightarrow \pm \infty}\frac{1}{n}\angle(E_i(x_n),F_{\infty}(x_n))=0, 1\leq i\leq \tilde{r}(\tilde{x}).$$
\end{enumerate}
\end{enumerate}
\end{Prop}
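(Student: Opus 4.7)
The plan is to bootstrap the one-sided Oseledec theorem (Theorem \ref{Ose}) into a genuine two-sided splitting on the inverse limit space by exploiting the access to backward orbits in $M^f$. First I would apply Theorem \ref{Ose} to $(f,\mu)$ to obtain a Borel set $G \subset M$ of full $\mu$-measure carrying the forward filtration $T_xM = V^0(x) \supset V^1(x) \supset \cdots \supset V^{r(x)}(x) = 0$ together with Lyapunov exponents $\lambda_1(x) > \cdots > \lambda_{r(x)}(x) \geq -\infty$. By Lemma \ref{EM} and the $f$-invariance $f(G) \subset G$, the cylinder set $\tilde{G} := \{(x_n) \in M^f : x_n \in G \text{ for all } n \in \mathbb{Z}\}$ is $\tilde{f}$-invariant and satisfies $\tilde{\mu}(\tilde{G}) = 1$, so it will be the ambient set $\tilde{\Lambda}$ of the proposition.

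Next I would convert the filtration into a splitting by intersecting images along the past. For $\tilde{x} = (x_n) \in \tilde{G}$ and each index $i$, set
$$W_i(\tilde{x}) := \bigcap_{n \geq 0} D_{x_{-n}}f^n\bigl(V^{i-1}(x_{-n})\bigr) \subset T_{x_0}M.$$
Because this is a decreasing sequence of subspaces of a finite-dimensional space, it stabilizes at some finite stage. Using Theorem \ref{Ose}(4) together with the fact that $D_{x_{-n}}f^n$ expands the quotient $V^{i-1}(x_{-n})/V^{i}(x_{-n})$ at rate $\tilde{\lambda}_i$, one checks that the rank drop $\dim W_i - \dim W_{i+1}$ equals the multiplicity $\tilde{m}_i(\tilde{x})$ of $\tilde{\lambda}_i$. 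I would then define $E_i(\tilde{x})$ to be a measurable complement of $W_{i+1}$ inside $W_i$, chosen by a Borel selection on the associated Grassmannian bundle, and propagate it along the orbit by $E_i(x_n) := D\tilde{f}^n|_{\tilde{x}} E_i(\tilde{x})$.

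The forward assertion $\frac{1}{m}\log|D\tilde{f}^m v| \to \tilde{\lambda}_i$ for $0 \neq v \in E_i$ then follows immediately from Theorem \ref{Ose}(4), since $E_i(x_n) \subset V^{i-1}(x_n) \setminus V^i(x_n)$. For the backward assertion one needs $Df|_{E_i(x_n)}$ to be an isomorphism so that $D\tilde{f}^{-m}$ makes sense on $E_i$; this is precisely where the inverse-limit construction pays off, because by the defining intersection every $v \in E_i$ admits preimages under $Df^n$ for every $n \geq 0$, so $v \in \ker Df$ would force the forward exponent to be $-\infty$, contradicting $\tilde{\lambda}_i > -\infty$. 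A Kingman-type subadditive argument applied to the now-well-defined backward cocycle on the measurable bundle $\bigsqcup_n E_i(x_n)$ yields the matching two-sided exponent. The residual direction $F_\infty(x_0)$ is taken to be a complement of $\bigoplus_i E_i(x_0)$ in $T_{x_0}M$, and it further decomposes as $K \oplus G_\infty$ by separating vectors eventually annihilated by some $Df^k$ from those on which $Df$ stays injective while backward preimages blow up in norm.

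The principal obstacle is the injectivity step described above: showing that $Df$ does not collapse any direction in $E_i$ with finite exponent. This is precisely the pathology that is absent for diffeomorphisms but can occur for endomorphisms, and it is the conceptual reason for working in $M^f$ at all; Mañé's non-invertible multiplicative ergodic theorem supplies the a.e. finiteness that makes the intersection argument nontrivial. The remaining items -- subexponential decay of the angles $\angle(E_i,E_j)$ and $\angle(E_i,F_\infty)$, measurability and $\tilde{f}$-invariance of $\tilde{r}$ and $\tilde{\lambda}_i$, and the $K \oplus G_\infty$ refinement of $F_\infty$ -- follow from standard tempered-distortion estimates together with Borel-selection arguments applied to $D\tilde{f}$, and require no further structural input beyond what has already been established.
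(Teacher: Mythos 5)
This Proposition is cited from \cite{Shub1} (and the surrounding text also points to Ruelle \cite{Rue} and Ma\~{n}\'e \cite{Mane}); the paper supplies no proof of it, so there is nothing internal to compare your attempt against. Judged on its own terms, your plan has a genuine gap at the central step.

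The subspace you define, $W_i(\tilde{x}) = \bigcap_{n\geq 0} D_{x_{-n}}f^n\bigl(V^{i-1}(x_{-n})\bigr)$, does not carry the backward Lyapunov information that the Proposition requires, so choosing $E_i$ to be \emph{any} measurable complement of $W_{i+1}$ in $W_i$ cannot produce the Oseledec subspace. The clearest way to see this is to run your construction on a diffeomorphism: there $Df$ is an isomorphism, each pushforward $D_{x_{-n}}f^n\bigl(V^{i-1}(x_{-n})\bigr)$ equals $V^{i-1}(x_0)$ exactly (the filtration is $Df$-invariant and dimensions match), so $W_i(\tilde{x}) = V^{i-1}(x_0)$ and the intersection is vacuous. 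Your $E_i$ is then an arbitrary complement of $V^i$ in $V^{i-1}$. Such a complement always has forward exponent $\tilde{\lambda}_i$, but a generic choice $E_i' = \{v + \phi(v): v\in E_i\}$ with $0\neq\phi: E_i \to V^i$ has backward exponent $-\tilde{\lambda}_{i+1}$, not $-\tilde{\lambda}_i$, because under $Df^{-n}$ the $V^i$-component decays more slowly and eventually dominates. So property (e) of the Proposition fails, and the purported Kingman argument cannot rescue it: the statement ``forward exponent determines backward exponent'' is false for arbitrary invariant complements and is precisely the nontrivial content of the two-sided Multiplicative Ergodic Theorem, not a consequence of the one-sided Theorem \ref{Ose}.

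What is actually needed is a \emph{canonical} characterization of $E_i(\tilde{x})$ that incorporates the backward rate, for instance as the set of $v\in V^{i-1}(x_0)$ admitting a backward chain $(v_{-n})_{n\ge 0}$ along $\tilde{x}$ with $\limsup_n \frac1n\log|v_{-n}| \le -\tilde{\lambda}_i$, together with a proof that these subspaces are measurable, $Df$-invariant, complementary, and have tempered angles. That is the substance of Ruelle's and Ma\~{n}\'e's non-invertible MET (cf. \cite{Rue, Mane, MXZ}); it cannot be obtained from Theorem \ref{Ose} by the intersection-of-pushforwards device alone. You do correctly identify, however, that injectivity of $Df$ on the finite-exponent directions is the crucial payoff of working in $M^f$, and the treatment of $F_\infty = K\oplus G_\infty$ along the lines you sketch is fine once the splitting itself has been properly constructed.
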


Although we do not use it in this paper, we state a result in \cite{Shub1} about the existence of unstable manifolds along orbits.  Let $f:M\rightarrow M$ be a $\mathcal{C}^{1+\beta}$ mapping and let $\mu$ be an invariant measure 
for $f$ with no zero exponent, then for almost all full orbits of $f$ there are stable and unstable disc families which are Borel, vary 
sub-exponentially along orbits and are invariant.
\begin{Rem}It is worth to note that the  understanding of dynamics in inverse limit spaces is far away from the understanding of the original maps. For example, it is well known that a non-invertible mapping on a compact manifold is in general not stable except it is expanding\cite{Przytycki}. Even so, for  Anosov mappings, the dynamical structure of its orbit space is stable under $\mathcal{C}^1$ small perturbations\cite{Liu1}\cite{Mane2}.  
\end{Rem}

 \subsection{Shadowing lemma for sequences of mappings}
 A lot of shadowing problems can be reduced to the following ``abstract" shadowing problem. 
Let $H_k$ be a sequence of Banach spaces ($k\in\mathbb{Z}$ or $k\in\mathbb{Z^{+}}$), we denote by $|\cdot|$ norms in $H_k$ and by $||\cdot||$ the corresponding operator norms for linear operators. Let us emphasize that the spaces $H_k$ are not assumed to be isomorphic. 

Consider a sequence of mappings 
$$\phi_k:H_k\rightarrow H_{k+1}$$ of the form 
$$\phi_k(v)=A_kv+w_{k+1}v$$ where $A_k$ are linear mappings.

It is assumed that the values $|\phi_k(0)|$ are uniformly small, say, $|\phi_k(0)|\leq d.$ We are looking for a sequence $v_k\in H_k$ such that $\phi_k(v_k)=v_{k+1}$ and the values $|v_k| $ are uniformly small, for example, the inequalities $$\sup_{k}|v_k|\leq Ld$$ hold with a constant $L$ independent of $d$. 
\begin{Thm}\cite{Pily}
Assume that 
\begin{enumerate}
\item there exist numbers $\lambda\in(0,1), N\geq 1,$ and projectors $P_k,Q_k:H_k\rightarrow H_k$ such that 
\begin{enumerate}
\item $||P_k||,||Q_k||\leq N, P_k+Q_k=I;$
\item $||A_k|_{S_k}||\leq \lambda, A_kS_k\subset S_{k+1};$
\end{enumerate}
\item if $U_{k+1}\neq\{0\}$, then there exist linear mappings $B_k:U_{k+1}\rightarrow U_k$ such that $$B_kU_{k+1}\subset U_k, ||B_k||\leq \lambda, A_kB_k|_{U_{k+1}}=I;$$
\item there exist numbers $k,\Delta>0$ such that inequalities 
$$|w_{k+1}(v)-w_{k+1}(v')|\leq k|v-v'|\text{ for } |v|,|v'|\leq \Delta$$ and $kN_1<1$ hold where $N_1=N\frac{1+\lambda}{1-\lambda}.$ 
\end{enumerate}
Set $L=\frac{N_1}{1-kN_1}, d_0=\frac{\Delta}{L}.$ If for a sequence of mapping $\phi_k$, we have $|\phi_k(0)|\leq d \leq d_0$, then there exist points $v_k\in H_k$ such that $\phi_k(v_k)=v_{k+1}$ and $|v_k|\leq Ld$. 
\end{Thm}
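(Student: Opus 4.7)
The plan is to reformulate the shadowing equation $\phi_k(v_k) = v_{k+1}$ as a fixed-point problem on the Banach space $\mathcal{B}$ of bounded sequences $v = (v_k)$ with $v_k \in H_k$, equipped with the norm $\|v\| = \sup_k |v_k|$. Since $\phi_k(v) = A_k v + w_{k+1}(v)$, a bounded trajectory corresponds to a fixed point of the composition that first evaluates the nonlinear term $z_j(v) := w_j(v_{j-1})$ and then inverts the linear hyperbolic operator $(v_k) \mapsto (v_{k+1} - A_k v_k)$ on $\mathcal{B}$. The estimate $\sup_k |v_k| \leq Ld$ will then be read off from the size of the fixed point.

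The heart of the proof is the construction of a bounded linear inverse $\mathcal{T}$ of this linear operator, with $\|\mathcal{T}\| \leq N_1 = N(1+\lambda)/(1-\lambda)$. I decompose each $v_k = s_k + u_k$ with $s_k = P_k v_k$, $u_k = Q_k v_k$, and solve each component in the direction in which it is contracting: the stable piece is obtained by forward summation, $s_k = \sum_{j \leq k}(A_{k-1}\cdots A_j)\,P_j z_j$, with geometric convergence coming from $\|A_k|_{S_k}\| \leq \lambda$; the unstable piece is obtained by backward summation via the partial inverses $B_k$, namely $u_k = -\sum_{j > k}(B_k \cdots B_{j-1})\,Q_j z_j$, with geometric convergence coming from $\|B_k\| \leq \lambda$. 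Two geometric series, together with $\|P_k\|, \|Q_k\| \leq N$, yield $\|\mathcal{T}\| \leq N_1$. With $\mathcal{T}$ in hand, the nonlinear map $\Psi(v) := \mathcal{T}(z(v))$ is Lipschitz with constant at most $\kappa N_1 < 1$ on $\bar B(0,\Delta)$ by hypothesis (3) (writing $\kappa$ for the Lipschitz constant called $k$ in the statement). Using $|w_j(v_{j-1})| \leq |\phi_{j-1}(0)| + \kappa|v_{j-1}| \leq d + \kappa L d$ on $\bar B(0,Ld)$, one checks $\|\Psi(v)\| \leq N_1(d + \kappa L d) = Ld$ from the definition $L = N_1/(1-\kappa N_1)$, so $\Psi$ sends $\bar B(0,Ld)$ into itself whenever $d \leq d_0 = \Delta/L$. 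Banach's contraction mapping theorem then produces the desired fixed point $v^{*}$ with $\|v^{*}\| \leq Ld$.

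The point that requires care is verifying that the backward-summation formula for $u_k$ really does satisfy $u_{k+1} - A_k u_k = Q_{k+1} z_{k+1}$ without our having assumed an invariance condition of the form $A_k U_k \subset U_{k+1}$. This will follow from a telescoping computation: applying $A_k$ term by term to the definition of $u_k$ and using $A_k B_k = I$ on $U_{k+1}$ (which applies because each tail product $B_{k+1}\cdots B_{j-1}Q_j z_j$ lies in $U_{k+1}$) shows that the $j=k+1$ summand contributes $-Q_{k+1} z_{k+1}$ and that the remaining tail is exactly $u_{k+1}$. This is the only place where hypothesis (2) is essentially used, and once it is verified the rest of the argument is routine Banach-space calculus; I expect no further obstacle beyond bookkeeping of the projectors and the geometric bounds.
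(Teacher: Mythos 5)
The paper states this result as an imported theorem from Pilyugin \cite{Pily} without reproducing a proof, so there is no in-paper argument to compare against; your write-up is a correct rendering of the standard Perron-type argument that underlies the cited result. The decomposition into a forward geometric sum over the contracting (stable) directions and a backward geometric sum via the partial right-inverses $B_k$ over the expanding (unstable) directions, the resulting bound $\|\mathcal{T}\|\le N_1=N\tfrac{1+\lambda}{1-\lambda}$, the telescoping verification that $u_{k+1}-A_ku_k=Q_{k+1}z_{k+1}$ using only $A_kB_k|_{U_{k+1}}=I$ and $B_m U_{m+1}\subset U_m$ (no invariance $A_kU_k\subset U_{k+1}$ needed), and the contraction estimate $\|\Psi(v)\|\le N_1(d+\kappa Ld)=Ld$ on $\bar B(0,Ld)\subset\bar B(0,\Delta)$ are all exactly as they should be.
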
 
We give sufficient conditions for the uniqueness of  sequence $\{v_k\}$. 
\begin{Thm}\cite{Pily} \label{Shadowing}Assume that 
\begin{enumerate}
\item there exist numbers $\lambda\in(0,1), N\geq 1,$ and projectors $P_k,Q_k:H_k\rightarrow H_k$ such that 
\begin{enumerate}
\item $||P_k||,||Q_k||\leq N, P_k+Q_k=I;$
\item $||A_k|_{S_k}||\leq \lambda, A_kS_k\subset S_{k+1};$
\end{enumerate}
\item $A_kU_k\subset U_{k+1}$ and $||A_k|_{U_k}||\geq\frac{1}{\lambda}$;
\item there exist numbers $k_0,\Delta>0$ such that$$|w_{k+1}(v)-w_{k+1}(v')|\leq k|v-v'|\text{ for } |v|,|v'|\leq \Delta$$ and for $k=Nk_0$ the inequalities 
$$\lambda+2k<1, \frac{1}{\lambda}-2k\geq \gamma>1, \frac{\lambda}{\gamma}+\frac{2k}{\gamma}<1$$ 
are fulfilled.
\end{enumerate}
Then the relations $$\phi_k(v_k)=v_{k+1},\phi_k(u_k)=u_{k+1}, |v_k|,|u_k|\leq \Delta,k\in\mathbb{Z},$$
imply that $v_k=u_k,k\in\mathbb{Z}.$
\end{Thm}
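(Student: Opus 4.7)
The plan is to set $e_j = v_j - u_j$ (I use $j$ for the time index to avoid collision with the Lipschitz-type constant $k$ in hypothesis (3)) and show the hyperbolic structure forces $e_j \equiv 0$. From $\phi_j(v) = A_j v + w_{j+1}(v)$ and the Lipschitz bound on $w_{j+1}$, the difference satisfies $e_{j+1} = A_j e_j + \epsilon_j$ with $|\epsilon_j| \leq k_0 |e_j|$. Decomposing $e_j = \sigma_j + \tau_j$ via the projectors $P_j, Q_j$ onto $S_j$ and $U_j$ and using $A_j S_j \subset S_{j+1}$, $A_j U_j \subset U_{j+1}$ gives
\begin{align*}
\sigma_{j+1} &= A_j \sigma_j + P_{j+1}\epsilon_j, \\
\tau_{j+1} &= A_j \tau_j + Q_{j+1}\epsilon_j.
\end{align*}
Setting $a_j = |\sigma_j|$, $b_j = |\tau_j|$, and using $|P_{j+1}\epsilon_j|, |Q_{j+1}\epsilon_j| \leq N k_0 |e_j| \leq k(a_j + b_j)$ (with $k = N k_0$), together with $\|A_j|_{S_j}\| \leq \lambda$ and the lower bound $|A_j \tau_j| \geq (1/\lambda) b_j$, I obtain
\begin{align*}
a_{j+1} &\leq (\lambda + k)\, a_j + k\, b_j, \\
b_{j+1} &\geq -k\, a_j + (1/\lambda - k)\, b_j.
\end{align*}

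Next I run cone arguments. Let $C^u_j = \{v : |P_j v| \leq |Q_j v|\}$. Forward invariance with expansion: if $e_j \in C^u_j$ (so $a_j \leq b_j$), the recursions give $a_{j+1} \leq (\lambda + 2k) b_j$ and $b_{j+1} \geq (1/\lambda - 2k) b_j \geq \gamma b_j$. The inequality $\lambda + 2k < \gamma$ (equivalent to $\lambda/\gamma + 2k/\gamma < 1$) then forces $a_{j+1}/b_{j+1} < 1$, so $e_{j+1}$ sits strictly inside $C^u_{j+1}$ and $b_{j+1} \geq \gamma b_j$. Iterating, $b_{j_0 + n} \geq \gamma^n b_{j_0}$, which is incompatible with the uniform bound $b_j \leq N|e_j| \leq 2N\Delta$ unless $b_{j_0} = 0$; then $a_{j_0} \leq b_{j_0} = 0$ forces $e_{j_0} = 0$.

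For the remaining case $a_{j_0} > b_{j_0}$ with $e_{j_0} \neq 0$, I iterate the cone invariance backwards in contrapositive form: if $e_j$ lies in the strict stable cone and is nonzero, then $e_{j-1}$ cannot lie in $C^u_{j-1}$ with $e_{j-1} \neq 0$ (else the forward claim would push $e_j$ strictly into $C^u_j$) and cannot be zero (else $\epsilon_{j-1} = 0$ and $e_j = 0$ by forward propagation). So $e_{j-1}$ too lies strictly in the stable cone and is nonzero, and by induction $a_j > b_j$ and $a_j > 0$ for all $j \leq j_0$. Applied with $b_j < a_j$, the first recursion yields $a_{j+1} < (\lambda + 2k) a_j$, whence $a_{j_0 - n} > (\lambda + 2k)^{-n} a_{j_0} \to \infty$ since $\lambda + 2k < 1$, again contradicting $a_j \leq 2N\Delta$. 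Therefore $e_{j_0} = 0$ for every $j_0$, i.e., $v_j = u_j$ for all $j$.

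The main obstacle is that the perturbation $\epsilon_j$ couples $\sigma_j$ and $\tau_j$, so the cone analysis must absorb the cross-terms; the three quantitative inequalities in hypothesis (3) are calibrated precisely for this purpose, ensuring forward expansion in the unstable cone and backward expansion in the stable cone simultaneously. A second subtlety is that $A_j|_{S_j}$ and $A_j|_{U_j}$ need not be invertible, so no genuine inverse cocycle is available; the backward blow-up of $a_j$ is extracted only indirectly, by running the forward recursion on the strict stable cone and using that an orbit trapped in that cone must have come from arbitrarily large stable components.
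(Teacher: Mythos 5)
The paper does not prove this theorem---it is quoted from Pilyugin's book with a citation and no argument---so there is no internal proof to compare against; I can only judge your argument on its own terms, and it is correct. The decomposition $e_j = \sigma_j + \tau_j$ via the projectors, the scalar recursions for $a_j = |\sigma_j|$ and $b_j = |\tau_j|$ with the Lipschitz cross-terms absorbed as $k(a_j + b_j)$ after passing through $P_{j+1}, Q_{j+1}$ (this is exactly where $k = Nk_0$ enters), and the two-sided cone dichotomy are precisely what the three numerical conditions in hypothesis (3) are calibrated for, and you use each of them exactly once: $1/\lambda - 2k \geq \gamma > 1$ drives the forward expansion $b_{j_0+n} \geq \gamma^n b_{j_0}$ when $e_{j_0} \in C^u_{j_0}$; $\lambda/\gamma + 2k/\gamma < 1$ gives strict invariance of the unstable cone so the iteration can continue; and $\lambda + 2k < 1$ drives the backward blow-up $a_{j_0 - n} > (\lambda + 2k)^{-n} a_{j_0}$ in the strict stable-cone case. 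The uniform bound $a_j, b_j \leq N|e_j| \leq 2N\Delta$ then yields the contradiction in both directions. Your closing remark is a genuinely important detail: neither $A_j|_{S_j}$ nor $A_j|_{U_j}$ is assumed invertible, so there is no inverse cocycle to run backward, and propagating strict stable-cone membership backward in time by contraposition of the forward cone-invariance is the right way around this. One small point to flag for completeness: hypothesis (2) as printed, $\|A_k|_{U_k}\| \geq 1/\lambda$, is literally a lower bound on the operator norm, whereas your recursion $b_{j+1} \geq (1/\lambda - k)b_j - k a_j$ uses the pointwise lower bound $|A_k u| \geq \lambda^{-1}|u|$ for every $u \in U_k$; the latter is the intended and standard reading (and is needed for the theorem to be true), but since the paper's transcription is ambiguous it is worth saying so explicitly.
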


\section{Construction of Horseshoe}
In this section, we focus our attention on the set $\tilde{\Lambda}$ which is given in Proposition \ref{Oseledec}.
\begin{Def}Let $\mu$ be an ergodic hyperbolic probability measure for a $\mathcal{C}^1$ mapping $f: M\rightarrow M$. Correspondingly,
we have  inverse limit space $M^f$, shift map $\tilde{f}$ and ergodic measure $\tilde{\mu}$ with respect to $\mu$. 
A compact positive measure set $\tilde{\Lambda}(\eta)
\subset \tilde{\Lambda}$ is called a {\bf $\eta$-uniformity block } for $\mu$(with tolerance $\eta>0$)
if there exists $K>0$ and a measurable map $C_{\eta}:\tilde{\Lambda}\rightarrow GL(d,\mathbb{R})$ which is continuous on subset $\tilde{\Lambda}(\eta)$
such that:
\begin{enumerate}
\item$\max\{||C^{-1}_{\eta}(\tilde{f}^n(\tilde{x}))||,||C_{\eta}(\tilde{f}^n(\tilde{x}))||\}< K\exp (\eta |n|)$, for each $\tilde{x}\in\tilde{\Lambda}$ and $n\in\mathbb{Z}.$
\item Denote $\tilde{\lambda}^{+}= \min\{\tilde{\lambda}_i>0\}$, $\tilde{\lambda}^{-}= \max\{\tilde{\lambda}_i<0\}$ and $\tilde{\lambda}=\min\{\tilde{\lambda}^+,-\tilde{\lambda}^{-}\}$ where $\tilde{\lambda}_1>\tilde{\lambda}_2>\cdots>\tilde{\lambda}_k>0>\tilde{\lambda}_{k+1}>\cdots>\tilde{\lambda}_{s}$ are distinct Lyapunov exponents of $\mu$, with multiplicities
$n_1,\cdots,n_s\geq1$, then there  exists $A_{1}=A_{1}(\tilde{x})\in GL(\sum_{1}^{k}n_i,\mathbb{R})$ and $A_{2}=A_{2}(\tilde{x})\in L(\sum_{k+1}^{s}n_i,\mathbb{R})$ such that 
$$||A_{1}(\tilde{x})^{-1}||^{-1}\geq e^{\tilde{\lambda}-\eta}, \,\,\, ||A_{2}(\tilde{x})||\leq e^{-\tilde{\lambda}+\eta}$$
and $C_{\eta}(\tilde{f}(\tilde{x}))\cdot Df(x_0)\cdot C_{\eta}^{-1}(\tilde{x})= diag(A_{1}(\tilde{x}), A_{2}(\tilde{x}))$.
\end{enumerate}
\end{Def}
\begin{Rem}$C_{\eta}(\tilde{x})$ here is a transformation of coordinates such that  splitting $E^u(\tilde{x})\oplus E^s(\tilde{x})$ are mapped to $e_1\oplus e_2$ where $e_1=(1^{d_u},0^{d_s}), e_2=(0^{d_u},1^{d_s})$ and $d_s,d_u$ means the dimension of stable bundle and unstable bundle respectively. Under this new coordinates, derivatives $Df$ present enough hyperbolicity at the first iteration. So the norm of $C_{\eta}(\tilde{x})$ is determined by the angle of $E^s(\tilde{x})$ and $E^u(\tilde{x})$ and large $N$ such that $||Df^N(v)||$ shows enough hyperbolicity which is determined by $K,\eta$ and $||Df||$.  
\end{Rem}
\begin{Def}A $\mu-$measurable map $C:M^f\rightarrow GL(n,\mathbb{R})$ is said to be {\bf tempered } with respect to $f$, or simply tempered, if for $\mu-$almost every $\tilde{x}\in M^f$
$$\lim_{n\rightarrow +\infty}\frac{1}{n}\log ||C^{\pm}(\tilde{f}^n(\tilde{x}))||=0.$$
\end{Def}
The following lemma is a technical but crucial lemma in smooth ergodic theory.
\begin{Lem}\label{Tempering} \cite{KatMe}  (Tempering-Kernel Lemma) Let $f:X\rightarrow X$ be a measurable transformation. If $K:X\rightarrow\mathbb{R}$ is a positive measurable tempered function, then for any $\epsilon>0$, there exists a positive measurable function $K_{\epsilon}:X\rightarrow\mathbb{R}$ such that $K(x)\leq K_{\epsilon}(x)$ and $$e^{-\epsilon}\leq \frac{K_{\epsilon}(f(x))}{K_{\epsilon}(x)}\leq e^{\epsilon}.$$
\end{Lem}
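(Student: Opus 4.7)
The proof proceeds by writing down an explicit tempering kernel as a weighted orbital supremum. The plan is to define
$$K_\epsilon(x) := \sup_{n \in \mathbb{Z}} K(f^n(x))\, e^{-\epsilon|n|},$$
assuming (as in the intended application to the invertible shift $\tilde{f}$ on the inverse limit) that $f$ is invertible and that temperedness is two-sided. The three required properties — almost-everywhere finiteness, $K_\epsilon \geq K$, and the multiplicative ratio bound — then all follow from elementary observations.

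First, I would verify that $K_\epsilon$ is finite on a set of full measure. Temperedness gives $\tfrac{1}{|n|}\log K(f^n x) \to 0$ for $\mu$-a.e. $x$, so there exists $N(x) < \infty$ with $K(f^n x) \leq e^{(\epsilon/2)|n|}$ whenever $|n| \geq N(x)$. The tail terms in the sup are therefore dominated by $e^{-(\epsilon/2)|n|}$, so the sup is attained on a finite set and $K_\epsilon(x) < \infty$. On the invariant null set where finiteness might fail, one can redefine $K_\epsilon$ harmlessly to any dominating measurable function. The bound $K_\epsilon \geq K$ is obtained by evaluating the sup at $n = 0$, and measurability is automatic because $K_\epsilon$ is a countable supremum of measurable functions.

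The multiplicative bound is then pure bookkeeping. Reindexing with $m = n + 1$,
$$K_\epsilon(f(x)) \;=\; \sup_{n \in \mathbb{Z}} K(f^{n+1}x)\, e^{-\epsilon|n|} \;=\; \sup_{m \in \mathbb{Z}} K(f^m x)\, e^{-\epsilon|m-1|}.$$
The triangle inequality $|m| - 1 \leq |m-1| \leq |m| + 1$ gives termwise
$$e^{-\epsilon}\, K(f^m x)\, e^{-\epsilon|m|} \;\leq\; K(f^m x)\, e^{-\epsilon|m-1|} \;\leq\; e^{\epsilon}\, K(f^m x)\, e^{-\epsilon|m|},$$
and taking the supremum over $m$ across all three sides yields $e^{-\epsilon} K_\epsilon(x) \leq K_\epsilon(f(x)) \leq e^\epsilon K_\epsilon(x)$, which is exactly the required double inequality.

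The only genuinely nontrivial step is the finiteness check, where the temperedness hypothesis is used in an essential way; every other step is a reindexing computation with the triangle inequality on $|m|$. If one wanted to dispense with invertibility, the backward iterates in the sup would have to be replaced by another device, for instance by passing to the natural extension, because it is precisely those backward terms that allow $K_\epsilon(f(x))$ to inherit the large value $K(x) \leq K_\epsilon(x)$ via the $m = 0$ slot of the reindexed supremum. This difficulty does not arise in the paper's intended use, where $f$ is replaced by the invertible shift $\tilde{f}$ on $M^f$.
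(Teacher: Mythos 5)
Your proof is correct and is the standard Katok--Mendoza argument (Lemma S.2.12 of \cite{KatMe}) that the paper cites but does not itself reproduce, so there is no internal proof to compare against: the definition $K_\epsilon(x)=\sup_{n\in\mathbb{Z}}K(f^nx)e^{-\epsilon|n|}$, finiteness from temperedness, the $n=0$ term for $K_\epsilon\geq K$, and the reindexing-plus-triangle-inequality computation for the ratio bound are exactly the textbook steps. Your closing remark is the one genuinely nontrivial observation worth keeping: the lower bound $K_\epsilon(f(x))\geq e^{-\epsilon}K_\epsilon(x)$ really does depend on the supremum ranging over negative $n$ as well, since a forward-only supremum $\sup_{n\geq 0}$ only yields the upper bound, and this is why the lemma as proved requires $f$ to be invertible (and temperedness to be two-sided), even though the paper's statement says merely ``measurable transformation'' and its definition of tempered is written one-sidedly. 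Since the paper applies the lemma only to the invertible shift $\tilde{f}$ on the inverse limit space, and the tempering of $C_\epsilon$ established in Theorem \ref{OPReduction} is in fact two-sided via the Multiplicative Ergodic Theorem, the gap in the stated hypotheses is harmless in context, and you are right to flag rather than attempt to repair it.
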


\begin{Thm}\label{OPReduction}{\bf(Oseledec-Pesin $\epsilon$ Reduction theorem for mappings)}
Suppose that $D\tilde{f}:\tilde{\Lambda}\rightarrow L(d,\mathbb{R})$ is the measurable cocycle over shift $\tilde{f}:M^f\rightarrow M^f$ where $\tilde{\Lambda}$ is the full measure set given in Proposition \ref{Oseledec}. Then there exists a measurable $\tilde{f}$- invariant function $r:\tilde{\Lambda} \rightarrow \mathbb{N} $ and number $\lambda_1(\tilde{x}),\cdots,\lambda_{r(\tilde{x})}(\tilde{x})\in\mathbb{R}$ and $\l_1(\tilde{x}),\cdots,\l_{r(\tilde{x})}(\tilde{x})\in\mathbb{N}$ depending only on $\tilde{x}$ with $\sum l_i(\tilde{x})=d$ such that for every $\epsilon>0$ there exists a tempered map $$C_{\epsilon}:\tilde{\Lambda}\rightarrow GL(d,\mathbb{R})$$ 
such that for almost every $\tilde{x}=\{x_i\}_{i\in\mathbb{Z}}\in \tilde{\Lambda}$ the cocycle $A_{\epsilon}(\tilde{x})=C_{\epsilon}(\tilde{f}(\tilde{x}))Df(x_{0})C^{-1}_{\epsilon}(\tilde{x})$ has the following Lyapunov block form
\[A_{\epsilon}(\tilde{x})=\begin{pmatrix}
\ A^1_{\epsilon}(x_{0})\\
&\ A^2_{\epsilon}(x_{0})
\end{pmatrix},\]
where  $A^{1}_{\epsilon}(\tilde{x})$ is a $\sum_{j=1}^kl_j(\tilde{x})\times \sum_{j=1}^kl_j(\tilde{x})$,  $A^{2}_{\epsilon}(\tilde{x})$ is a $\sum_{j=k+1}^r l_j(\tilde{x})\times\sum_{j=k+1}^r l_j(\tilde{x})$ matrix and 
\[||(A^{1}_{\epsilon}(\tilde{x}))^{-1}||^{-1}\geq e^{\lambda(\tilde{x})-\epsilon} ,  ||A^{2}_{\epsilon}(\tilde{x})||\leq e^{-\lambda(\tilde{x})+\epsilon}.\]
\end{Thm}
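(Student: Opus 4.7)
The plan is to construct $C_\epsilon$ as the change of coordinates from the ambient Riemannian metric to a \emph{Lyapunov inner product} adapted to the Oseledec splitting provided by Proposition \ref{Oseledec}. The construction parallels the classical Oseledec-Pesin reduction for diffeomorphisms, with the additional input that on the inverse limit space $M^f$, Proposition \ref{Oseledec} supplies a genuine splitting $T_{x_0}M=E_1(x_0)\oplus\cdots\oplus E_{\tilde r(\tilde x)}(x_0)\oplus F_\infty(x_0)$ along each orbit, on which $Df|_{E_i}$ is an isomorphism.  That isomorphism property is precisely what allows us to invert along the orbit inside each $E_i$, restoring the two-sided geometry that is used in the diffeomorphism case.

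First, I would declare the summands of the splitting mutually orthogonal in the new inner product, and on each $E_i(x_0)$ define the Lyapunov inner product depending on the sign of $\tilde\lambda_i(\tilde x)$:
\[
\langle u,v\rangle'_{\tilde x}=\sum_{n\geq 0}\langle D\tilde f^{-n}u,D\tilde f^{-n}v\rangle e^{2n(\tilde\lambda_i-\epsilon)}\quad\text{if }\tilde\lambda_i>0,
\]
\[
\langle u,v\rangle'_{\tilde x}=\sum_{n\geq 0}\langle D\tilde f^{n}u,D\tilde f^{n}v\rangle e^{-2n(\tilde\lambda_i+\epsilon)}\quad\text{if }\tilde\lambda_i<0,
\]
with an analogous one-sided sum on the degenerate summand $F_\infty$ (only forward iterates are needed there since $D\tilde f^n|_{F_\infty}$ collapses exponentially by (d) of Proposition \ref{Oseledec}). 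Convergence of both sums is guaranteed by the Oseledec exponent estimate (e), and the non-invertible issue is sidestepped because on each $E_i$ the restriction $D\tilde f|_{E_i}$ is an isomorphism along $\tilde x$.

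Let $C_\epsilon(\tilde x)$ be the matrix that sends the standard Riemannian orthonormal basis at $x_0$ to an orthonormal basis of $T_{x_0}M$ adapted to $\bigoplus E_i\oplus F_\infty$ with respect to $\langle\cdot,\cdot\rangle'_{\tilde x}$. A direct computation shows $C_\epsilon(\tilde f\tilde x)\,Df(x_0)\,C_\epsilon^{-1}(\tilde x)$ is block-diagonal with one block $A^i_\epsilon(\tilde x)$ per Oseledec subspace, and the telescoping of the defining sums under $\tilde f$ gives $\|A^i_\epsilon(\tilde x)\|\leq e^{\tilde\lambda_i+\epsilon}$ and $\|A^i_\epsilon(\tilde x)^{-1}\|^{-1}\geq e^{\tilde\lambda_i-\epsilon}$ on the invertible blocks.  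Grouping the positive-exponent blocks into $A^1_\epsilon$ and the rest into $A^2_\epsilon$, and setting $\lambda(\tilde x)=\min\{\tilde\lambda^+,-\tilde\lambda^-\}$, yields the claimed inequalities.

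The main obstacle will be showing that $C_\epsilon$ is tempered, since this requires controlling both the distortion between the Lyapunov and ambient norms on each $E_i$ and the angles between the different Oseledec summands. The first is controlled directly by the Lyapunov limits from Proposition \ref{Oseledec}(e), which imply $\tfrac{1}{n}\log\|C_\epsilon^{\pm}(\tilde f^n\tilde x)\|\to 0$ coming from each individual block; the second uses the sub-exponential decay of angles from Proposition \ref{Oseledec}(h) to bound the distortion of the change of basis that orthogonalizes the splitting. Both estimates give sub-exponential growth along the orbit, which is precisely the hypothesis needed to apply the Tempering-Kernel Lemma \ref{Tempering} to replace the bound by the desired $e^{\pm\epsilon}$ cocycle control; this finally packages everything into the statement that $C_\epsilon$ is tempered.
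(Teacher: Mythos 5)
Your proposal is essentially the paper's proof: adapt the Lyapunov inner product from the diffeomorphism case, using the fact (from Proposition \ref{Oseledec}) that $D\tilde f|_{E_i}$ is an isomorphism along each full orbit to justify the backward sums on the expanding blocks and the forward sums on the contracting blocks (including $F_\infty$), take $C_\epsilon$ to be the change of coordinates to the new inner product, and read off the hyperbolicity bounds by telescoping. Two minor points of divergence are worth noting.

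First, you build the Lyapunov metric block-by-block over the individual Oseledec subspaces $E_1,\dots,E_{\tilde r},F_\infty$ and only afterward group them into $A^1_\epsilon$ (positive exponents) and $A^2_\epsilon$ (the rest). The paper collapses to the two-block decomposition $T_{x_0}M=E_1\oplus E_2$ (expanding $\oplus$ contracting) from the start. Your finer version proves a bit more (each Oseledec block is individually normalized and the cocycle is block-diagonal with respect to the full Oseledec splitting), and it cleanly isolates where $F_\infty$ is handled, but for the stated theorem the two-block version suffices and the bookkeeping is lighter.

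Second, the temperedness of $C_\epsilon$: you argue directly, bounding $\|C_\epsilon^\pm(\tilde f^n\tilde x)\|$ using the Lyapunov-limit estimates from Proposition \ref{Oseledec}(e) for the per-block distortion and the sub-exponential angle decay from (h) for the Gram--Schmidt distortion. The paper instead reduces to block matrices via the same angle estimate, then runs a Poincar\'e-recurrence plus spectral-comparison argument (comparing the Lyapunov spectra of $A_\epsilon$ and of $D\tilde f$ along returns to a set where $\|C_\epsilon^\pm\|$ is bounded) and extracts the zero growth rate from the two conjugacy identities. Both routes are valid; yours is the more standard ``distortion + angles'' argument and is arguably more direct. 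One small correction, though: the conclusion of this theorem is only that $C_\epsilon$ is \emph{tempered}, i.e.\ $\tfrac{1}{n}\log\|C_\epsilon^\pm(\tilde f^n\tilde x)\|\to 0$, and your direct estimates already deliver exactly that. The invocation of the Tempering-Kernel Lemma \ref{Tempering} at the end is not needed here --- that lemma is used later in the proof of Theorem \ref{Pesin} to upgrade the Lyapunov neighborhood radius $\delta_\eta$ to a function $\xi_\eta$ with controlled ratio $e^{\pm\eta}$, which is a different (and stronger) conclusion than mere temperedness of $C_\epsilon$.
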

\begin{proof}This result follows directly from the same proof as the diffeomorphism case (See Theorem S.2.10\cite{KatMe}).  We give a sketch here.
If $\tilde{x}\in \tilde{\Lambda}$ then $T_{x_0}M=E_1(x_0)\oplus E_2(x_0)$ where $E_1(x_0)$ is the expanding part and $E_2(x_0)$ is the contracting part.  Define a new scalar product on each $E_1(x_0)$ and $\epsilon>0$ as follows:
If $u,v\in E_1(x_0)$ then
$$<u,v>'_{\tilde{x},1}:=\sum_{m=0}^{+\infty}<D\tilde{f}|_{E^1}^{-m}(\tilde{x})u, D\tilde{f}|_{E^1}^{-m}(\tilde{x})v)>e^{2m\lambda}e^{-2\epsilon m},$$
where $<\cdot,\cdot>$ denotes the standard scalar product on $\mathbb{R}^n;$
If $u,v\in E_2(x_0)$ then
$$<u,v>'_{\tilde{x},2}:=\sum_{m=0}^{+\infty}<D\tilde{f}|_{E^2}^{m}(\tilde{x})u, D\tilde{f}|_{E^2}^{m}(\tilde{x})v)>e^{-2m\lambda}e^{-2\epsilon m},$$
where $<\cdot,\cdot>$ denotes the standard scalar product on $\mathbb{R}^n.$
Now according to  the definition of Lyapunov exponents $\tilde{\lambda}_i(\tilde{x}),$ for each $\tilde{x}\in\tilde{\Lambda}$  and $\epsilon>0$ there exists a constant $C(\tilde{x},\epsilon) $ such that 
$$||D\tilde{f}^{-m}m(\tilde{x})v||\leq C(\tilde{x},\epsilon)\exp^{m\lambda(\tilde{x})}e^{\epsilon m/2}||v||,\forall m\in \mathbb{N}, u,v\in E^1;$$
$$||D\tilde{f}^m(\tilde{x})v||\leq C(\tilde{x},\epsilon)\exp^{-m\lambda(\tilde{x})}e^{\epsilon m/2}||v||,\forall m\in \mathbb{N}, u,v\in E^2,$$
 therefore $<u,v>'_{\tilde{x},i}\leq C^2(\tilde{x},\epsilon)\sum_{m\in\mathbb{N}}e^{-m\epsilon},$
which implies that $<u,v>'_{\tilde{x},\epsilon}$ is well defined.

We recall that $D\tilde{f}^{m+1}(\tilde{x})=D\tilde{f}^{m}(\tilde{f}\tilde{x})Df(x_0)$, whence
\begin{eqnarray*}
<Df(x_0)v,Df(x_0)v>'_{\tilde{f}\tilde{x},1}&=&\sum_{m\in\mathbb{N}}||D\tilde{f}^{-m}(\tilde{f}(\tilde{x}))(Df(x_0)v)||^2e^{2\lambda(\tilde{x})m}e^{-2\epsilon|m|}\\
&=&\sum_{m=0}^{+\infty}||D\tilde{f}^{-m+1}(\tilde{x})v||^2e^{2\lambda(\tilde{x})m}e^{-2\epsilon m}\\
&\geq& e^{2(\lambda(\tilde{x})-\epsilon)}\sum^{+\infty}_{m=0}||D\tilde{f}^{-m}(\tilde{x})v||^2e^{2\lambda(\tilde{x})m}e^{-2\epsilon m}\\
&\geq&e^{2(\lambda(\tilde{x})-\epsilon)}<u,v>'_{\tilde{x},1},
\end{eqnarray*}
where $u,v\in E^1$
So we have 
\begin{equation}\label{norm}
\frac{<Df(x_0)v,Df(x_0)v>'_{\tilde{f}\tilde{x},1}}{<v,v>'_{\tilde{x},1}}\geq e^{2(\lambda(\tilde{x})-\epsilon)},\forall u,v\in E^1
\end{equation}
Similarly, we have 
\begin{eqnarray*}
<Df(x_0)v,Df(x_0)v>'_{\tilde{f}\tilde{x},2}&=&\sum_{m\in\mathbb{N}}||D\tilde{f}^{m}(\tilde{f}(\tilde{x}))(Df(x_0)v)||^2e^{-2\lambda(\tilde{x})m}e^{-2\epsilon|m|}\\
&=&\sum_{m=0}^{+\infty}||D\tilde{f}^{-m+1}(\tilde{x})v||^2e^{2\lambda(\tilde{x})m}e^{-2\epsilon m}\\
&\leq& e^{2(-\lambda(\tilde{x})+\epsilon)}\sum^{+\infty}_{m=0}||D\tilde{f}^{m}(\tilde{x})v||^2e^{2\lambda(\tilde{x})m}e^{-2\epsilon m}\\
&\leq&e^{2(-\lambda(\tilde{x})+\epsilon)}<u,v>'_{\tilde{x},2},
\end{eqnarray*}
where $u,v\in E^2$
So we have 
\begin{equation}\label{norm2}
\frac{<Df(x_0)v,Df(x_0)v>'_{\tilde{f}\tilde{x},2}}{<v,v>'_{\tilde{x},2}}\leq e^{2(-\lambda(\tilde{x})+\epsilon)},\forall u,v\in E^2
\end{equation}
To extend the scalar product to $T_{x_0}M$, consider $$<u,v>'_{\tilde{x}}=\sum^{2}_{i=1}<u_i,v_i>'_{\tilde{x},i},$$
where $v_i$ is the projection of $v$ to $E^i(\tilde{x})$. Now let $C_{\epsilon}(\tilde{x})$ be the positive symmetric matrix such that if $u,v\in T_{x_0}M$ then 
$$<u,v>'_{\tilde{x}}=<C_{\epsilon}(\tilde{x})u,C_{\epsilon}(\tilde{x})v>$$ and define $$A_{\epsilon}(\tilde{x})=C_{\epsilon}(\tilde{f}(\tilde{x}))Df(x_0)C^{-1}_{\epsilon}(\tilde{x}).$$
Thus if $u,v\in E_i(\tilde{x}),$ then 
\begin{eqnarray*}
<Df(x_0)u,Df(x_0)v>'_{\tilde{f}\tilde{x},i}&=&<C_{\epsilon}(\tilde{f}\tilde{x})Df(x_0)u,C_{\epsilon}(\tilde{f}\tilde{x})Df(x_0)v>\\
&=&<A_{\epsilon}(\tilde{x})C_{\epsilon}(\tilde{x})u,A_{\epsilon}(\tilde{x})C_{\epsilon}(\tilde{x})v>
\end{eqnarray*}
Applying $v=C_{\epsilon}(\tilde{x})u$ in inequality (\ref{norm},\ref{norm2}), we get
\[||(A^{1}_{\epsilon}(\tilde{x}))^{-1}||^{-1}\geq e^{\lambda(\tilde{x})-\epsilon} ,  ||A^{2}_{\epsilon}(\tilde{x})||\leq e^{-\lambda(\tilde{x})+\epsilon}.\]

It remains to prove that $C_{\epsilon}(x)$ is tempered.  Since the angles between the different subspaces satisfy a sub exponential lower estimate due to Theorem\ref{Oseledec}, it is enough to consider just block matrices. Set $B_N:=\{\tilde{x}\in\tilde{\Lambda}|||C_{\epsilon}^{\pm}(\tilde{x})||<N\}.$ For some $N>0$ large enough, by the Poincare Recurrence Theorem, there exists a set $Y\subset B_N$ such that $ \mu(B_N\backslash Y)=0 $ and the orbit of $\tilde{y}\in Y$ returns infinitely many times to $Y$. Thus
let $m_k$ be a sequence such that $\tilde{f}^{m_k}(\tilde{x})\in Y$ for all $k$. Then 
$$A_{\epsilon}(\tilde{y},m_k):=A_{\epsilon}(\tilde{f}^{m_k}\tilde{y})\cdots A_{\epsilon}(\tilde{y})\leq ||C_{\epsilon}(\tilde{f}^{m_k}\tilde{y})||\,||D\tilde{f}^{m_k}\tilde{y}||\,||C^{-1}_{\epsilon}(\tilde{y})||$$
and therefore for almost every point $\tilde{y}\in Y$ the spectra of $A_{\epsilon}$ and $D\tilde{f}$ are the same.
 Since $N$ is chosen arbitrarily this is true for almost every $\tilde{x}\in\tilde{\Lambda}.$ Observe that 
 $$C^{-1}_{\epsilon}(\tilde{f}^n\tilde{x})=D\tilde{f}^m(\tilde{x})C^{-1}_{\epsilon}(\tilde{x})(A_{\epsilon}(\tilde{x},n))^{-1}\text{ and } D\tilde{f}^n(\tilde{x})=C_{\epsilon}^{-1}(\tilde{f}^n\tilde{x})A_{\epsilon}(\tilde{x},n)C_{\epsilon}(\tilde{x}),$$
 so by taking the growth rates in both equations we find that $$\lim_{n\rightarrow\infty}\frac{1}{n}\log ||C^{\pm}_{\epsilon}(\tilde{f}^n\tilde{x})||=0$$
 for all $\tilde{x}\in\tilde{\Lambda}$ for which $A_{\epsilon}$ and $D\tilde{f}$ have the same spectrum. 
\end{proof}
\begin{Thm} If $f:M\rightarrow M$ is $\mathcal{C}^1$ mapping and $\mu$ is an ergodic hyperbolic probability measure for $f$, then for every $\eta >0$
there exists a uniformity block $\tilde{\Lambda}(\eta)$  of tolerance $\eta$ for $\mu$. Moreover, $\tilde{\Lambda}(\eta)$ can be chosen to have measure 
arbitrarily close to 1 with suitable choose of $K$.
 \end{Thm}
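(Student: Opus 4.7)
The plan is to combine the Oseledec--Pesin $\epsilon$-Reduction Theorem \ref{OPReduction} with the Tempering-Kernel Lemma \ref{Tempering}, and then extract a compact sub-block on which the conjugating map is continuous by appealing to Luzin's theorem and the inner regularity of $\tilde{\mu}$ on the Polish space $M^f$. Fix $\eta>0$.

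First I would apply Theorem \ref{OPReduction} with tolerance $\epsilon=\eta/2$. This produces a tempered measurable map $C_{\epsilon}:\tilde{\Lambda}\to GL(d,\mathbb{R})$ such that the conjugated cocycle $A_{\epsilon}(\tilde{x}) = C_{\epsilon}(\tilde{f}\tilde{x})Df(x_0)C_{\epsilon}^{-1}(\tilde{x})$ has the required block-diagonal form with
$$||(A^{1}_{\epsilon}(\tilde{x}))^{-1}||^{-1}\geq e^{\tilde{\lambda}-\eta/2},\qquad ||A^{2}_{\epsilon}(\tilde{x})||\leq e^{-\tilde{\lambda}+\eta/2},$$
which is already stronger than the block bounds required in condition (2) of the definition (with $\eta$ in place of $\eta/2$); here I use ergodicity of $\mu$ so that $\lambda(\tilde{x})=\tilde{\lambda}$ a.e.

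Next, set $h(\tilde{x}):=\max\{||C_{\epsilon}(\tilde{x})||,||C_{\epsilon}^{-1}(\tilde{x})||\}$. Temperedness of $C_{\epsilon}$ implies that $h$ is a positive measurable tempered function in the sense of Lemma \ref{Tempering}. Applying that lemma with tolerance $\eta/2$ yields a positive measurable function $K_{\eta}:\tilde{\Lambda}\to\mathbb{R}^+$ with $h(\tilde{x})\leq K_{\eta}(\tilde{x})$ and
$$e^{-\eta/2}\leq \frac{K_{\eta}(\tilde{f}\tilde{x})}{K_{\eta}(\tilde{x})}\leq e^{\eta/2},$$
so that iterating gives $K_{\eta}(\tilde{f}^n\tilde{x})\leq K_{\eta}(\tilde{x})\,e^{(\eta/2)|n|}$ for every $n\in\mathbb{Z}$.

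To finish, for each $K>0$ consider the sub-level set $B_K:=\{\tilde{x}\in\tilde{\Lambda}:K_{\eta}(\tilde{x})\leq K\}$; since $K_{\eta}$ is finite-valued $\tilde{\mu}$-a.e., $\tilde{\mu}(B_K)\nearrow 1$ as $K\to\infty$. Using inner regularity of $\tilde{\mu}$ together with Luzin's theorem applied to the measurable map $C_{\epsilon}$, I select a compact subset $\tilde{\Lambda}(\eta)\subset B_K$ on which $C_{\eta}:=C_{\epsilon}|_{\tilde{\Lambda}(\eta)}$ is continuous and $\tilde{\mu}(\tilde{\Lambda}(\eta))$ is arbitrarily close to $\tilde{\mu}(B_K)$, hence arbitrarily close to $1$ by choosing $K$ large. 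For any $\tilde{x}\in\tilde{\Lambda}(\eta)$ and $n\in\mathbb{Z}$,
$$\max\{||C_{\eta}(\tilde{f}^n\tilde{x})||,||C_{\eta}^{-1}(\tilde{f}^n\tilde{x})||\}\leq K_{\eta}(\tilde{f}^n\tilde{x})\leq K\,e^{(\eta/2)|n|}<K\,e^{\eta|n|},$$
which is condition (1); condition (2) is inherited from Step 1.

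The main obstacle is that temperedness of $C_{\epsilon}$ is only a pointwise a.e.\ statement without any uniform exponential rate; this is precisely what the Tempering-Kernel Lemma is designed to remedy, exchanging the pointwise asymptotics for a uniform cocycle bound, at the expense of passing to a sub-level set of a new dominating tempered function. The remaining compactness and continuity of $C_{\eta}$ on $\tilde{\Lambda}(\eta)$ are standard measure-theoretic refinements (Luzin plus inner regularity) that do not affect the exponential estimates.
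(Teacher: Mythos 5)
Your proof is correct and follows the same route the paper indicates (Oseledec--Pesin $\epsilon$-Reduction plus Luzin), but it is more careful: the paper's one-line proof cites only Theorem \ref{OPReduction} and Luzin, while you explicitly insert the Tempering-Kernel Lemma \ref{Tempering} to convert the merely tempered $C_{\epsilon}$ into the uniform two-sided estimate $\max\{\|C_{\eta}(\tilde{f}^{n}\tilde{x})\|,\|C_{\eta}^{-1}(\tilde{f}^{n}\tilde{x})\|\}<K\,e^{\eta|n|}$; this step is genuinely needed and is exactly the mechanism the paper uses later in the proof of Theorem \ref{Pesin}, so you are filling in what the author leaves implicit. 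Two small remarks: (i) the quantifier ``for each $\tilde{x}\in\tilde{\Lambda}$'' in condition (1) of the definition should surely read $\tilde{x}\in\tilde{\Lambda}(\eta)$ (otherwise no uniform $K$ can exist), and your argument produces the bound precisely on the sub-level set $B_{K}$ and hence on the Luzin set $\tilde{\Lambda}(\eta)\subset B_{K}$, which is the only reasonable reading; (ii) your observation that $\tilde{f}$ is invertible is what lets the forward ratio bound $e^{-\eta/2}\leq K_{\eta}(\tilde{f}\tilde{x})/K_{\eta}(\tilde{x})\leq e^{\eta/2}$ iterate to $K_{\eta}(\tilde{f}^{n}\tilde{x})\leq K_{\eta}(\tilde{x})e^{(\eta/2)|n|}$ for \emph{all} $n\in\mathbb{Z}$, which is a point worth making explicit since the original map $f$ need not be invertible.
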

\begin{proof}Applying Oseledec and Pesin's Reduction theorem given in Theorem  \ref{OPReduction} and Lusin theorem, we can get this theorem.
\end{proof}

The assumption of the existence of hyperbolic ergodic measure ensures hyperbolicity.  The norm we used in the proof of Theorem \ref{OPReduction} is called Lyapunov norm. In non-uniformly hyperbolic case, Lyapunov metric is a powerful technique.
 Under Lyapunov norm, one can get 
uniformly hyperbolic property.
From the definition of uniformity blocks, we can see that for any $\tilde{x}\in\tilde{\Lambda}(\eta)$  there exist
linearization and  diagonalizaton of $f$ along  orbit $\tilde{x}$. In the following theorem, we want to estimate the $\mathcal{C}^1$ distance between  diagonalization and our maps under local charts. 
Next theorem says for uniformity block there exists uniformly bound which depends only on the tolerance of the block.
We need a promotion from points to their neighborhoods, which requires higher regularity  $\mathcal{C}^{1+\alpha}.$
\begin{Thm}\label{Pesin} {\bf (Pesin's argument for mappings)} Let $f: M\rightarrow M$ be a $\mathcal{C}^{1+\alpha}$ mapping preserving the ergodic hyperbolic 
probability measure
$\mu$. Let $\tilde{\Lambda}(\eta)$ be a uniformity block of tolerance $\eta$ for $\mu$ . Then there exist $K>0$, $\xi_0>0$, a measurable map $C_{\eta}:\tilde{\Lambda}\rightarrow GL(d,\mathbb{R})$ which is continuous on $\tilde{\Lambda}(\eta)$
$$C_{\eta}(\tilde{x}):T_{x_0}M\rightarrow \mathbb{R}^d; \,\, \tilde{x}=\{x_n\}_{n\in \mathbb{Z}}\in \tilde{\Lambda}$$
and a measurable function $\xi:\tilde{\Lambda}\rightarrow \mathbb{R}^{+}$ which is also continuous on $\tilde{\Lambda}(\eta)$
such that:
\begin{enumerate}
\item$\max\{||C^{-1}_{\eta}(\tilde{f}^n(\tilde{x}))||,||C_{\eta}(\tilde{f}^n(\tilde{x}))||\}< K\exp (\eta |n|)$, for each $\tilde{x}\in\tilde{\Lambda}$ and $n\in\mathbb{Z}.$
\item Denote $\tilde{\lambda}^{+}= \min\{\tilde{\lambda}_i>0\}$, $\tilde{\lambda}^{-}= \max\{\tilde{\lambda}_i<0\}$ and $\tilde{\lambda}=\min\{\tilde{\lambda}^+,-\tilde{\lambda}^{-}\}$ where $\tilde{\lambda}_1>\tilde{\lambda}_2>\cdots>\tilde{\lambda}_k>0>\tilde{\lambda}_{k+1}>\cdots>\tilde{\lambda}_{s}$ are distinct Lyapunov exponents of $\mu$, with multiplicities
$n_1,\cdots,n_s\geq1$, then there  exists $A_{1}=A_{1}(\tilde{x})\in GL(\sum_{1}^{k}n_i,\mathbb{R})$ and $A_{2}=A_{2}(\tilde{x})\in L(\sum_{k+1}^{s}n_i,\mathbb{R})$ such that 
$$||A_{1}(\tilde{x})^{-1}||^{-1}\geq e^{\tilde{\lambda}-\eta}, \,\,\, ||A_{2}(\tilde{x})||\leq e^{-\tilde{\lambda}+\eta}$$
and $C_{\eta}(\tilde{f}(\tilde{x}))\cdot Df(x_0)\cdot C_{\eta}^{-1}(\tilde{x})= diag(A_{1}(\tilde{x}), A_{2}(\tilde{x}))$.
\item $\xi_0 e^{-\eta n}\leq \xi(\tilde{f}^n(\tilde{x}))\leq \xi_0e^{\eta n}$, for all $\tilde{x}\in \tilde{\Lambda}(\eta)$ and $n\in\mathbb{Z}$;

\item $f(B(x_n, K^{-1}\xi(\tilde{f}^n(\tilde{x})))\subset B(x_{n+1}, \xi(\tilde{f}^{n+1}(\tilde{x})))$;

\item there are $\mathcal{C}^2$ charts $\phi_{\tilde{x}}: B(x_0, K^{-1}\xi(\tilde{x}))\rightarrow \mathbb{R}^d $ for any $\tilde{x}\in\tilde{\Lambda}$ such that 
  \begin{enumerate}
  \item $\tilde{x}\longmapsto \phi_{\tilde{x}}$ is continuous in the $\mathcal{C}^1$ topology on the target $\tilde{\Lambda}(\eta).$
   \item $\phi_{\tilde{x}}(x_0)=0,$ and $D\phi_{\tilde{x}}(x_0)=C_{\eta}(\tilde{x});$	
  \item	on the set $\phi_{\tilde{x}}(B(x_0,K^{-1}\xi(\tilde{x}))),$ we have 

$d_{\mathcal{C}^1}(\phi_{\tilde{f}\tilde{x}}\circ f\circ \phi^{-1}_{\tilde{x}}, C_{\eta}(\tilde{f}(\tilde{x}))\cdot Df(x_0)\cdot C^{-1}_{\eta}(\tilde{x}))<\eta;$
  \item for all $\tilde{x}\in \tilde{\Lambda}(\eta)$ and for all $y,y'\in B(x_0,K^{-1}\xi(\tilde{x})),$ we have 
$K^{-1}d(y,y')\leq||\phi_{\tilde{x}}(y)-\phi_{\tilde{x}}(y')||\leq Ke^{\eta \tau(\tilde{x})}d(y,y')$ where $\tau(\tilde{x}):=\{\text{ the first return time of } \tilde{x}\text{ to }\tilde{\Lambda}(\eta)\}$. 
  \end{enumerate}
\end{enumerate}
\end{Thm}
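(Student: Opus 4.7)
The plan is to derive the theorem by combining the Oseledec--Pesin reduction (Theorem~\ref{OPReduction}), Lusin's theorem, the tempering-kernel lemma (Lemma~\ref{Tempering}), and a standard exponential-chart construction whose smoothness is controlled by the $\mathcal{C}^{1+\alpha}$ regularity of $f$. Parts (1) and (2) are essentially a repackaging of the uniformity block structure: Theorem~\ref{OPReduction} (applied with $\epsilon=\eta$) provides a tempered map $C_\eta:\tilde{\Lambda}\to GL(d,\mathbb{R})$ realizing the block diagonalization with the sharp Lyapunov estimates. Applying Lemma~\ref{Tempering} to the tempered function $\tilde{x}\mapsto\max(\|C_\eta(\tilde{x})\|,\|C_\eta^{-1}(\tilde{x})\|)$ yields a dominating function whose orbit ratio lies in $[e^{-\eta},e^\eta]$; a Lusin subset $\tilde{\Lambda}(\eta)\subset\tilde{\Lambda}$ of measure arbitrarily close to $1$ provides the continuity of $C_\eta$ and the uniform bound $K$ of (1), with the estimate along orbits following by iterating the slowly varying property.

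Next I would build the scale function $\xi$ using the global H\"older bound $\|Df_y-Df_x\|\leq L\,d(x,y)^\alpha$ coming from the $\mathcal{C}^{1+\alpha}$ hypothesis. On a ball $B(x_0,r)$, the deviation of $f$ from its linearization is $O(r^{1+\alpha})$ in $\mathcal{C}^0$ and $O(r^\alpha)$ in $\mathcal{C}^1$; after conjugation by $C_\eta$ these are inflated by factors involving $\|C_\eta(\tilde{f}\tilde{x})\|$ and $\|C_\eta^{-1}(\tilde{x})\|$. Setting
\[
\tilde{\xi}(\tilde{x}) \;:=\; c(\eta,L)\,\bigl(\|C_\eta(\tilde{f}\tilde{x})\|\,\|C_\eta^{-1}(\tilde{x})\|^{1+\alpha}\bigr)^{-1/\alpha},
\]
one obtains a function whose reciprocal is tempered (since $C_\eta$ is), and a second application of the tempering-kernel lemma produces $\xi\leq\tilde{\xi}$ satisfying the slowly varying estimate in (3). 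Property (4) is secured by a further shrinking of $\xi$ using the uniform Lipschitz bound on $f$.

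Finally I would define the charts by $\phi_{\tilde{x}}(y):=C_\eta(\tilde{x})\exp_{x_0}^{-1}(y)$ on $B(x_0,K^{-1}\xi(\tilde{x}))$. Items (5)(a),(b) are immediate from continuity of $\tilde{x}\mapsto C_\eta(\tilde{x})$ on $\tilde{\Lambda}(\eta)$, the smoothness of $\exp$, and the identities $\exp_{x_0}^{-1}(x_0)=0$, $D\exp_{x_0}^{-1}(x_0)=\mathrm{Id}$. Item (5)(d) follows directly from the bounds on $\|C_\eta^{\pm}\|$. For the key assertion (5)(c), write
\[
\phi_{\tilde{f}\tilde{x}}\circ f\circ \phi_{\tilde{x}}^{-1} \;=\; C_\eta(\tilde{f}\tilde{x})\circ\bigl(\exp_{x_1}^{-1}\circ f\circ \exp_{x_0}\bigr)\circ C_\eta^{-1}(\tilde{x}).
\]
The inner map has derivative $Df(x_0)$ at $0$, and its $\mathcal{C}^1$ deviation from $Df(x_0)$ on a ball of radius $r$ is bounded by $L'r^\alpha$ (with $L'$ absorbing manifold curvature). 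The choice of $\xi$ above is exactly what is needed so that after conjugation by $C_\eta$ this deviation remains below $\eta$.

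The main obstacle I anticipate is the simultaneous calibration of $\xi$: it must be small enough so that the H\"older oscillation of $Df$ dominates after conjugation by the potentially large $C_\eta$, yet slowly varying along orbits in the sense of (3). The tempering-kernel lemma is precisely the tool for bridging these two requirements. A subtlety specific to the non-invertible setting is that the unstable direction $E^u$ depends on the full orbit $\tilde{x}$ rather than on $x_0$ alone; however, since the entire construction lives on $M^f$ and the splitting is supplied orbit-wise by Proposition~\ref{Oseledec}, the classical diffeomorphism argument transfers verbatim, with Ma\~n\'e's version of the multiplicative ergodic theorem ensuring that $Df$ restricted to unstable directions along orbits in $\tilde{\Lambda}$ is an isomorphism.
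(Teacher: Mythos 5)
Your proposal is correct and follows essentially the same route as the paper's own proof: conditions (1)–(2) come directly from the uniformity block (via Theorem~\ref{OPReduction}), the scale $\tilde{\xi}(\tilde{x})=c(\eta,L)\bigl(\|C_\eta(\tilde{f}\tilde{x})\|\,\|C_\eta^{-1}(\tilde{x})\|^{1+\alpha}\bigr)^{-1/\alpha}$ is exactly the paper's $\delta_\eta(\tilde{x})$, the tempering-kernel lemma is applied to it in the same way to obtain $\xi$ satisfying (3), and the charts $\phi_{\tilde{x}}=C_\eta(\tilde{x})\circ\exp_{x_0}^{-1}$ with the H\"older estimate on $D_w h_{\tilde{x}}$ give (5)(a)--(d). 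The only stylistic deviation is that you invoke the tempering lemma a second time for $\max(\|C_\eta\|,\|C_\eta^{-1}\|)$ and Lusin to produce $\tilde{\Lambda}(\eta)$, whereas the theorem takes the uniformity block as given, so that step is already subsumed in the hypotheses.
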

\begin{proof}Our aim is to construct for almost every $\tilde{x}\in\tilde{\Lambda}=\{x_n\}_{n\in\mathbb{Z}}$, a neighborhood $N(x_n)$ such that $f$ acts on $N(x_n)$ very much like the linear map $A_{\epsilon}(x_n)$ in a neighborhood of the origin.  This  follows from the techniques used in Pesin's original proof in Theorem S.3.1 \cite{KatMe}.  

For $\eta>0$ and let $\tilde{\Lambda}\subset M^f$ be the set for which the Oseledec-Pesin $\eta$-Reduction Theorem\ref{OPReduction} works. For each $\tilde{x}\in\tilde{\Lambda}$ consider $(C_{\eta}(\tilde{f}^n(\tilde{x})))_{n\in\mathbb{Z}}$, linear maps from $T_{x_n}M$ to $\mathbb{R}^d,$  where $C_{\eta}$ is the Lyapunov transformation of coordinates given in Theorem \ref{OPReduction}  such that  $A_{\eta}(\tilde{x})=C_{\eta}(\tilde{f}(\tilde{x}))Df(x_{0})C^{-1}_{\eta}(\tilde{x})$ has the following Lyapunov block form
 \[A_{\eta}(\tilde{x})=\begin{pmatrix}
\ A^1_{\eta}(x_{0})\\
&\ A^2_{\eta}(x_{0})  
\end{pmatrix}.\]

For $\tilde{x}=\{x_n\}_{n\in\mathbb{Z}}$ and $ r>0$, let $T_{x_{0}}M(r):=\{w\in T_{x_{0}}M|||w||\leq r\}$; choose $r(\tilde{x})$   small enough so that for every $x_{0}\in M$ the exponential map $\exp_{x_{0}}:T_{x_{0}}M(r(\tilde{x}))\rightarrow M$ is an embedding, $||D_w\exp_{x_{0}}||\leq 2$,  $\exp_{f(x_{0})}$ is injective on $\exp^{-1}_{f(x_{0})}\circ f \circ \exp_{x_{0}}(T_{x_{0}}M(r(\tilde{x})))$.  As the domains of exponential maps have uniform radius, we can choose $r(\tilde{x})= r.$ Define 
$$f_{\tilde{x}}:=C_{\eta}(\tilde{f}(\tilde{x}))\circ \exp^{-1}_{x_{1}}\circ f\circ \exp_{x_0}\circ C_{\eta}^{-1}(\tilde{x}),$$
so $f_{\tilde{x}}$ is defined in the ellipsoid $P(x_{0})=C_{\eta}(\tilde{x})(T_{x_{0}}M(r))\subset \mathbb{R}^d$. Let $p(\tilde{x})=r min\{||C_{\eta}(\tilde{x})||,||C_{\eta}(\tilde{x})^{-1}||\}$; thus if $w\in T_{x_{0}}M$ and $||w||\leq p(\tilde{x})$ then $w\in P(x_{0})$, that is the Euclidean ball $B(0,p(\tilde{x}))$ is contained in $P(x_{0})$.

Now write $f_{\tilde{x}}(w)=A_{\eta}(\tilde{x})w+h_{\tilde{x}}(w)$ and observe that $D_0f_{x_{0}}=A_{\eta}(\tilde{x})$, by the chain rule, so $D_0h_{\tilde{x}}=0$. Write $\exp^{-1}_{x_{1}}\circ f\circ \exp_{x_{0}}=Df_{x_{0}}+g_{x_{0}}$. Since $f\in \mathcal{C}^{1+\alpha}$, there exists $L>0$ such that $||D_ug_{x_{0}}||\leq L||u||^{\alpha},$ thus 
$$||D_wh_{\tilde{x}}||=||D_w(C_{\eta}(\tilde{f}(\tilde{x}))\circ g_{\tilde{x}}\circ C^{-1}_{\eta}(\tilde{x}))||\leq L ||C_{\eta}(\tilde{f}(\tilde{x}))||||C^{-1}_{\eta}(\tilde{x})||^{1+\alpha}||w||^{\alpha}.$$
Hence if $||w||$ is sufficiently small the construction of the nonlinear part of $f_{\tilde{x}}$ is negligible. In particular,
$$||D_wh_{\tilde{x}}||<\eta\text{ for }||w||<\delta_{\eta}(\tilde{x}):=(L||C_{\eta}(\tilde{f}(\tilde{x}))||||C^{-1}_{\eta}(\tilde{x})||^{1+\alpha}/\eta)^{-1/\alpha}.$$
By the Mean Value Theorem also
$$||h_{\tilde{x}}w||<\eta\text{ for } ||w||<\delta_{\eta}(\tilde{x}).$$
From the definition of $\delta_{\eta}(\tilde{x})$ we have 
$$\lim_{m\rightarrow \infty}\frac{1}{m}\log \delta_{\eta}(\tilde{f}^m(\tilde{x}))=\lim_{m\rightarrow}\frac{1}{\alpha m}\log||C_{\eta}(\tilde{f}^m(\tilde{x}))||+\lim_{m\rightarrow}\frac{1+\alpha}{\alpha m}\log||C_{\epsilon}^{-1}(\tilde{f}^m(\tilde{x}))||=0.$$
Applying the Tempering-Kernel Lemma \ref{Tempering} to $\delta_{\eta}(\tilde{x})$ we find a measurable $K_{\eta}:\tilde{\Lambda}\rightarrow \mathbb{R}^{+}$ such
that $K_{\eta}(\tilde{x})\geq \delta_{\eta}^{-1}(\tilde{x})$ and $e^{-\eta}\leq K_{\epsilon}(\tilde{x})/K_{\eta}(\tilde{f}(\tilde{x}))\leq e^{\eta}$.
Define $\xi_{\eta}(\tilde{x})=K_{\eta}(\tilde{x})^{-1}\leq \delta_{\eta}(\tilde{x}).$ Then
$$e^{-\eta}\leq \xi_{\eta}(\tilde{x})/\xi_{\eta}(\tilde{f}(\tilde{x}))\leq e^{\eta}.$$
Now $$\phi_{\tilde{x}}:B(x_{0}, K^{-1}\xi_{\eta}(\tilde{x}))\rightarrow \mathbb{R}^d, z\mapsto  C_{\eta}(\tilde{x})\circ\exp^{-1}_{x_{0}}(z)$$ is obviously an embedding. Condition $(a)$ follows from the continuous property of $C_{\eta}$ on $\tilde{\Lambda}(\eta)$ and condition (b) follows from the definition. We only need to prove (c) and (d). Actually, on the set $\phi_{\tilde{x}}(B(x_{0}, K^{-1}\xi_{\eta}(\tilde{x})))$ , we have 
\begin{eqnarray*}
&&d(\phi_{\tilde{f}\tilde{x}}\circ f\circ \phi^{-1}_{\tilde{x}}, C_{\eta}(\tilde{f}(\tilde{x}))\circ Df(x_{0})\circ C^{-1}_{\eta}(\tilde{x}))\\
&=&d(C_{\eta}(\tilde{f}\tilde{x})\circ\exp^{-1}_{x_{1}}\circ f \circ\exp_{x_0}\circ C^{-1}_{\eta}(\tilde{x}), C_{\eta}(\tilde{f}(\tilde{x}))\circ Df(x_{1})\circ C^{-1}_{\eta}(\tilde{x}))\\
&=&||f_{(\tilde{x})}-A_{\eta}(\tilde{x})||\\
&\leq&||h_{\tilde{x}}||\\
&\leq&\eta.
\end{eqnarray*}
Now we give the proof of condition (d). From the definition of $C_{\eta}$, we only need to proof the second inequality. For all $\tilde{x}\in\tilde{\Lambda}(\eta)$ and for any $y,y'\in B(x_{n}, K^{-1}\xi_{\eta}(\tilde{f}^{n+1}\tilde{x}))$, we have 
\begin{eqnarray*}&&||\phi_{\tilde{f}^n\tilde{x}}(y)-\phi_{\tilde{f}^n\tilde{x}}(y')||\\
&\leq&||C_{\eta}(\tilde{f}^n\tilde{x})||||y-y'||\\
&\leq&K \exp(\eta|n|)||y-y'||.
\end{eqnarray*}
Actually, the last inequality can be improved to the form stated in our theorem. This is a direct result from the continuity of $C_{\eta}.$

\end{proof}
The set $B(x_n,K^{-1}\xi(\tilde{f}^n\tilde{x}))$ are called Lyapunov neighborhoods of the orbit $\tilde{x}$ at $x_n$. Although it may happen that the restriction of  $f$ on Lyapunov neighborhoods may not be invertible, the degeneration happens only along the stable direction which does not affect shadowing mechanism.  The size of Lyapunov neighborhoods decay slowly (at rate at most $e^{-\eta}$) for points along orbit $\tilde{x}$. 
\begin{Def}A sequence $(\tilde{x}_n)_{n\in\mathbb{Z}}\subset M^f$ is called an $\varepsilon-$pseudo orbit with jumps in set $\tilde{\Lambda}(\eta)$ 
if $ d(\tilde{f}(\tilde{x}_n), \tilde{x}_{n+1})<\varepsilon,$ and $ d(\tilde{f}(\tilde{x}_n), \tilde{x}_{n+1})>0 \Longrightarrow 
\tilde{f}(\tilde{x}_n),\tilde{x}_{n+1}\in\tilde{\Lambda}(\eta) ,$ for every $n\in\mathbb{Z}$.
\end{Def}
\begin{Thm}\label{shadowing}{\bf (Orbit Shadowing property for mappings )}Let  $f$ be a $\mathcal{C}^{1+\alpha}$ mapping with ergodic hyperbolic measure $\mu$ and satisfying the integrability condition. Then for every $\eta>0$ sufficiently small and $\tilde{\Lambda}(\eta)$ is a uniformity block  of tolerance $\eta$ for $\mu$,
 there exist $C>0,\varepsilon_0>0$ with the following properties holding:
 
For every $\varepsilon\in(0,\varepsilon_0),$ if $(\tilde{x}_n)_{n\in\mathbb{Z}}$ is an $\varepsilon-$pseudo orbit for $\tilde{f}$ with jumps
in $\tilde{\Lambda}(\eta)$, then there exists a unique orbit $\tilde{y}\in\tilde{\Lambda}$,
 $C\varepsilon$- shadowing  $(\tilde{x}_n)_{n\in\mathbb{Z}}$ , i.e. $d(\tilde{f}^n\tilde{y},\tilde{x}_n)\leq C\epsilon$, for all $n\in\mathbb{Z}$. 
 \end{Thm}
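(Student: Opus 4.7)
The plan is to translate the shadowing problem on $M^f$ into the abstract Banach-space setting of Theorem \ref{Shadowing} by pulling the pseudo-orbit back through the Pesin charts supplied by Theorem \ref{Pesin}, and then transfer the unique shadowing sequence so produced back to $M^f$.

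First I would fix $\eta>0$ sufficiently small relative to $\tilde{\lambda}$ (so that $\lambda:=e^{-\tilde{\lambda}+2\eta}$ lies in $(0,1)$ and the gap inequalities in Theorem \ref{Shadowing} hold with room to spare). Given an $\varepsilon$-pseudo orbit $(\tilde{x}_n)_{n\in\mathbb{Z}}$ with jumps in $\tilde{\Lambda}(\eta)$, set $H_n:=\mathbb{R}^d$ and consider the sequence of maps
\[
\phi_n := \phi_{\tilde{x}_{n+1}}\circ f\circ \phi_{\tilde{x}_n}^{-1},
\]
defined on a neighborhood of $0\in H_n$ where the composition makes sense (using property (4) of Theorem \ref{Pesin} together with the fact that the jumps only occur inside $\tilde{\Lambda}(\eta)$, where $\phi_{\tilde{x}}$ depends continuously on $\tilde{x}$). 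Writing $\phi_n(v)=A_n v + w_{n+1}(v)$ with $A_n:=C_\eta(\tilde{x}_{n+1})Df(\pi\tilde{x}_n)C_\eta^{-1}(\tilde{x}_n)$, property (5c) of Theorem \ref{Pesin} tells us that $w_{n+1}$ is $\eta$-Lipschitz on a ball of radius $K^{-1}\xi(\tilde{x}_n)$, and a standard estimate using the $\varepsilon$-closeness of $\tilde{f}\tilde{x}_n$ and $\tilde{x}_{n+1}$ together with the Lipschitz norm of $\phi_{\tilde{x}_{n+1}}$ (bounded by $K$ inside $\tilde{\Lambda}(\eta)$) gives $|w_{n+1}(0)|=|\phi_n(0)|\leq K\varepsilon$.

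The next step is to verify the hyperbolicity hypotheses. The Lyapunov block form $A_n=\mathrm{diag}(A_1(\tilde{x}_n),A_2(\tilde{x}_n))$ gives a splitting $H_n=U_n\oplus S_n$ with orthogonal projectors $P_n,Q_n$ of norm $1$; by (2) of Theorem \ref{Pesin} we have $\|A_n|_{S_n}\|\leq e^{-\tilde{\lambda}+\eta}$ and, crucially, $\|A_n^{-1}|_{U_{n+1}}\|\leq e^{-\tilde{\lambda}+\eta}$, i.e.\ $\|A_n|_{U_n}\|\geq e^{\tilde{\lambda}-\eta}$. This is precisely the content of condition (2) of Theorem \ref{Shadowing}. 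The essential point is that although $f$ is non-invertible, by the Pugh--Shub/Ma\~n\'e theorem (Proposition \ref{Oseledec}(a)) $Df$ restricted to the unstable bundle $E^u$ is an isomorphism along every full orbit in $\tilde{\Lambda}$, so the linear map $B_n:=(A_n|_{U_n})^{-1}:U_{n+1}\to U_n$ is genuinely defined; any degeneracy caused by critical points lives entirely inside $S_n$ and so does not obstruct the mechanism.

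With these verifications in hand, Theorem \ref{Shadowing} yields a unique sequence $v_n\in H_n$ with $\phi_n(v_n)=v_{n+1}$ and $|v_n|\leq L \cdot K\varepsilon$, provided $K\varepsilon<d_0$; this defines $\varepsilon_0$ and $C:=LK\cdot K$ (the outer $K$ coming from the bi-Lipschitz constant of $\phi_{\tilde{x}}$). Setting $y_n:=\phi_{\tilde{x}_n}^{-1}(v_n)$ gives points $y_n\in M$ with $f(y_n)=y_{n+1}$ and $d(y_n,\pi\tilde{x}_n)\leq C\varepsilon$; the sequence $\tilde{y}:=(y_n)_{n\in\mathbb{Z}}\in M^f$ is the desired shadowing orbit, and $d(\tilde{f}^n\tilde{y},\tilde{x}_n)\leq C\varepsilon$ follows from the definition of the metric on $M^f$ after summing the pointwise estimates (absorbing the geometric factor $\sum 2^{-|i|}$ into $C$). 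Uniqueness of $\tilde{y}$ among orbits that stay $C\varepsilon$-close transfers from the uniqueness in Theorem \ref{Shadowing}.

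The main obstacle I anticipate is the non-uniformity of the Pesin charts across the jumps: the radius $K^{-1}\xi(\tilde{x})$ on which $\phi_{\tilde{x}}$ is defined shrinks sub-exponentially off of $\tilde{\Lambda}(\eta)$, so one must check that the $\varepsilon$-pseudo orbit with jumps confined to $\tilde{\Lambda}(\eta)$ interacts correctly with this shrinkage---i.e.\ that the tempering estimate $\xi_0 e^{-\eta n}\leq \xi(\tilde{f}^n\tilde{x})$ from Theorem \ref{Pesin}(3) suffices to keep $|v_n|$ inside the domain of Lipschitz control $\Delta$ of Theorem \ref{Shadowing}. This is ultimately a matter of choosing $\eta$ small enough that $L\cdot K\varepsilon_0<\Delta$ uniformly, which is possible precisely because the constants $K,L$ depend only on $\eta$ and the spectral gap $\tilde{\lambda}$.
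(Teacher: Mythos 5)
Your proposal follows essentially the same route as the paper: pull the pseudo-orbit back through the Pesin charts of Theorem \ref{Pesin}, write each transition map as a hyperbolic linear part plus a small Lipschitz remainder, apply the abstract sequence-shadowing lemma (Theorem \ref{Shadowing}), and push the unique solution back to $M^f$. One small imprecision: you set $A_n := C_\eta(\tilde{x}_{n+1})\,Df(\pi\tilde{x}_n)\,C_\eta^{-1}(\tilde{x}_n)$ and then invoke its Lyapunov block form, but the block form is exactly satisfied only by $A_\eta(\tilde{x}_n)=C_\eta(\tilde{f}\tilde{x}_n)\,Df(\pi\tilde{x}_n)\,C_\eta^{-1}(\tilde{x}_n)$; at a jump point $\tilde{x}_{n+1}\neq\tilde{f}\tilde{x}_n$ your $A_n$ is only $O(\varepsilon)$-close to block diagonal and need not preserve $U_n\oplus S_n$ exactly, so the hypotheses $A_n S_n\subset S_{n+1}$, $A_n U_n\subset U_{n+1}$ fail literally. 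The paper sidesteps this by taking $L_n:=A_\eta(\tilde{x}_n)$ as the linear part and absorbing the entire difference $\phi_{\tilde{x}_{n+1}}\circ f\circ\phi^{-1}_{\tilde{x}_n}-L_n$ (including the $K\varepsilon$ shift at the origin produced by the jump) into the remainder, which is the cleaner bookkeeping you should adopt; once you do, your verifications of the hyperbolicity and Lipschitz conditions go through as written.
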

\begin{proof} 

Our goal here is to construct sequences of mappings on space $\mathbb{R}^d$ satisfying  conditions in  Lemma \ref{Shadowing}. For any  $(\tilde{x}_n)_{n\in\mathbb{Z}}$, an $\varepsilon$ pseudo orbit for $\tilde{f}$ with jumps
in $\tilde{\Lambda}(\eta)$, denote $x_n=\pi(\tilde{x}_n).$ $0<\varepsilon<\varepsilon_0$ is determined later.  Following the notation in the proof of Theorem \ref{Pesin}, we have sequences of  mappings $\tilde{g}_n:\mathbb{R}^d\rightarrow \mathbb{R}^d, \tilde{g}_n=\phi_{\tilde{f}\tilde{x}_n}\circ f\circ \phi^{-1}_{\tilde{x}_n}:\mathbb{R}^d\rightarrow \mathbb{R}^d$ and sequences of linear mappings $L_n:\mathbb{R}^d\rightarrow \mathbb{R}^d, L_n=C(\tilde{f}\tilde{x}_n\circ Df(\pi(\tilde{x}_n))\circ C^{-1}(\tilde{x}_n)= A_{\eta}(\tilde{x})=\begin{pmatrix}
\ A^1_{\eta}(\tilde{x}_n)\\
&\ A^2_{\eta}(\tilde{x}_n)  
\end{pmatrix}.
$  From Theorem \ref{Pesin}, we have $d(\tilde{g}_n(v),L_n(v))\leq \eta, \forall n\in \mathbb{Z}, $  and any $v\in \phi_{\tilde{x}}(B(x_n,K^{-1}\xi(\tilde{x}_n))).$
Denote $$\Phi_n:\mathbb{R}^d\rightarrow \mathbb{R}^d=\phi_{\tilde{x}_{n+1}}\circ f\circ \phi^{-1}_{\tilde{x}_n}=L_n+\phi_{\tilde{x}_{n+1}}\circ f\circ \phi^{-1}_{\tilde{x}_n}-L_n.$$Considering the jumping points, i.e. $0<d(\tilde{f}\tilde{x}_n,\tilde{x}_{n+1})\leq \varepsilon, $ by the continuity of $\phi_{\tilde{x}}$ on $\tilde{\Lambda}(\eta)$ and $\phi\in\mathcal{C}^{\infty}$, we have 
\begin{eqnarray*}
&&|\phi_{\tilde{x}_{n+1}}\circ f\circ \phi^{-1}_{\tilde{x}_n}(0)-L_n(0)|\\
&=&|\phi_{\tilde{x}_{n+1}}(f(x_n))|\\
&=&|\phi_{\tilde{x}_{n+1}}(f(x_n))-\phi_{\tilde{f}(\tilde{x}_{n})}(f(x_n))|\\
&\leq&K\varepsilon.
\end{eqnarray*}
Moreover,
\begin{eqnarray*}
&&|\phi_{\tilde{x}_{n+1}}\circ f\circ \phi^{-1}_{\tilde{x}_n}(v)-L_n(v)-\phi_{\tilde{x}_{n+1}}\circ f\circ \phi^{-1}_{\tilde{x}_n}(v')+L_n(v')|\\
&\leq&|L_n(v)-L_n(v')|+|\phi_{\tilde{x}_{n+1}}\circ f\circ\phi_{\tilde{x}_n}(v)-\phi_{\tilde{x}_{n+1}}\circ f\circ\phi_{\tilde{x}_n}(v')|\\
&\leq&C|v-v'|^{\alpha}
\end{eqnarray*}
It is easy to see that sequences  of mappings $\Phi_n$ also satisfy other conditions in Lemma \ref{Shadowing}, thus there is an unique sequence of points $z_n\in\mathbb{R}^d$  satisfying $\Phi_n(z_n)=(z_{n+1})$. Then, $\{\phi_{\tilde{x}_n}^{-1}z_n\}_{n\in\mathbb{Z}}$ is a real orbit under map $f$. 



\end{proof}
 
As $\tilde{f}:M^f\rightarrow M^f$ is invertible and measure entropy comes from any positive measure subset,
invariant or not,  we have the following type of Katok's argument which says there exist horseshoes in inverse limit space $M^f$.
 In order to clarify to process of projecting 
the horseshoe in the inverse limit space to the initial space, we give the construction here briefly. Our proof follows the the idea used in \cite{ACW}. These ideas of constructing pseudo orbits also appeared in \cite{LLST, LST,LSY}.
\begin{Thm}\label{Kat}{\bf(Katok's argument for mappings )}
Let $f:M\rightarrow M$ be any $\mathcal{C}^{1+\alpha}$ mapping  on a closed Riemannian manifold  $M$ with an ergodic hyperbolic probability measure $\mu$.   Set any small constant $\delta>0$ and a weak $*$ neighborhood $\tilde{\mathcal{V}}$ of $\tilde{\mu}$ in the space of $\tilde{f}$-invariant probability measures on $\tilde{\Lambda}$. Then there exists a horseshoe $\tilde{H}\subset M^f$ such that:
\begin{enumerate}
 \item $h_{top}(\tilde{H},\tilde{f})>h(\tilde{\mu},\tilde{f})-\delta=h(\mu,f)-\delta$. 
\item if $\tilde{\lambda}_1>\tilde{\lambda}_2>\cdots>\tilde{\lambda}_k$ are the distinct Lyapunov exponents of $\mu$, with 
multiplicities $n_1,\cdots,n_k\geq 1$, denote $\tilde{\lambda}$ the same as before, then there exists a dominated splitting on $T_{\tilde{x}}M=\sqcup T_{\pi(\tilde{f}^n\tilde{x})}M, \tilde{x}\in \tilde{H}$:
$$T_{\tilde{x}}M=E^u\oplus E^s,$$ and  there exists $N\geq1$ such that for each $i=1,2$ each $\tilde{x}\in\tilde{H}$ and each unit vector $v\in E^u(\pi(\tilde{x})), u\in E^s(\pi(\tilde{x}))$,
$$||Df^{-N}_{\pi(\tilde{x})}(v)||\leq \exp((-\tilde{\lambda}_i+\delta)N),$$$$||Df^{N}_{\pi(\tilde{x})}(u)||\leq \exp((-\tilde{\lambda}_i+\delta)N)$$
\item all the invariant probability measures supported on $\tilde{H}$ lies in $\mathcal{V}.$

\item $\tilde{H}$ is $\delta-$close to the support of $\tilde{\mu}$  in the Hausdorff distance.
\end{enumerate}
\end{Thm}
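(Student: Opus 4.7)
The plan is to combine the Pesin-type Lyapunov charts from Theorem \ref{Pesin} with the shadowing property of Theorem \ref{shadowing}, following the Katok--Avila--Crovisier--Wilkinson strategy in the inverse limit. First I would fix $\eta$ much smaller than $\delta$ and select a uniformity block $\tilde{\Lambda}(\eta)$ of $\tilde{\mu}$-measure close to $1$. Using the Brin--Katok local entropy formula (which applies to the invertible system $(\tilde{f},\tilde{\mu})$) together with Birkhoff's ergodic theorem applied to the indicator of $\tilde{\Lambda}(\eta)$, I would extract, for every large $n$, a subset $\tilde{\Lambda}_n \subset \tilde{\Lambda}(\eta)$ of $\tilde{\mu}$-measure bounded below uniformly in $n$, consisting of points $\tilde{x}$ that spend a density $\geq 1-\eta$ of the interval $[0,n]$ inside $\tilde{\Lambda}(\eta)$, whose $n$-th iterate lies again in $\tilde{\Lambda}(\eta)$, and which satisfy the Bowen-ball estimate $\tilde{\mu}(B_n(\tilde{x},\varepsilon)) \leq \exp(-n(h_{\tilde{\mu}}(\tilde{f})-\delta/2))$.

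The counting step is the heart of the argument. The Bowen-ball estimate yields an $(n,\varepsilon)$-separated set $E_n \subset \tilde{\Lambda}_n$ of cardinality at least $\exp(n(h_{\tilde{\mu}}(\tilde{f})-\delta))$, taking $\varepsilon<\varepsilon_0$ smaller than the shadowing constant of Theorem \ref{shadowing}. Partitioning $\tilde{\Lambda}(\eta)$ into finitely many cells of diameter $\ll \varepsilon$ adapted to the weak-$*$ neighborhood $\tilde{\mathcal{V}}$ and to the Hausdorff scale $\delta$, a pigeonhole argument refines $E_n$ to $E'_n$ of the same exponential growth rate such that all $\tilde{x}\in E'_n$ share a common cell containing both $\tilde{x}$ and $\tilde{f}^n\tilde{x}$. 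Then any bi-infinite sequence $(i_k)_{k\in\mathbb{Z}}\in (E'_n)^{\mathbb{Z}}$ gives rise to the concatenated $\varepsilon$-pseudo orbit $\ldots, \tilde{x}_{i_0},\tilde{f}\tilde{x}_{i_0},\ldots,\tilde{f}^{n-1}\tilde{x}_{i_0},\tilde{x}_{i_1},\ldots$ whose jumps lie in $\tilde{\Lambda}(\eta)$; Theorem \ref{shadowing} provides a unique shadowing orbit in $M^f$.

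I would define $\tilde{H}$ as the closure of the set of all such shadowing orbits. The uniqueness clause of Theorem \ref{shadowing} gives a coding map $\Pi:(E'_n)^{\mathbb{Z}}\to \tilde{H}$ that is continuous, injective, and intertwines the shift with $\tilde{f}^n$, so $(\tilde{H},\tilde{f}^n)$ is topologically conjugate to the full shift on $\sharp E'_n$ symbols. This yields statement (1) with $h_{top}(\tilde{H},\tilde{f}) \geq \tfrac{1}{n}\log \sharp E'_n \geq h_{\tilde{\mu}}(\tilde{f})-\delta$. For statement (2), I would pull back the splitting $e_1\oplus e_2$ from the Lyapunov charts to obtain $E^u\oplus E^s$ on $\tilde{H}$; the required exponent estimate with a uniform $N$ follows from the second clause of Theorem \ref{Pesin} and the fact that shadowing orbits spend density close to $1$ in $\tilde{\Lambda}(\eta)$, so after $N$ iterates enough hyperbolicity accumulates. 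Statement (4) is immediate since every shadowing orbit lies within $C\varepsilon$ of the chosen orbit pieces, which themselves lie in $\mathrm{supp}\,\tilde{\mu}$. Statement (3) follows because the Birkhoff averages of any continuous test function along a shadowing orbit differ from its averages along the $\tilde{x}_{i_k}$ by an amount controlled by $\varepsilon$ and by uniform continuity, and the latter lie in $\tilde{\mathcal{V}}$ by the ergodic theorem provided $n$ is large.

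The main obstacle I expect is the simultaneous calibration of all four conclusions: the same parameters $(\eta,\tilde{\Lambda}(\eta),\varepsilon,n)$ must make the shadowing hypothesis valid, force the entropy lower bound, keep the empirical measures inside the prescribed weak-$*$ neighborhood, and ensure Hausdorff proximity. This is handled by choosing parameters in the order $\tilde{\mathcal{V}}\leadsto\delta\leadsto\eta\leadsto\tilde{\Lambda}(\eta)\leadsto\varepsilon\leadsto n$, so that each subsequent choice respects the constraints of the earlier ones. A secondary subtlety is that the non-invertibility of $f$ is handled entirely by working in $M^f$ where $\tilde{f}$ is a homeomorphism; the one delicate point is that the orbit pieces used in the pseudo orbit are full $\tilde{f}$-orbits, so the backward half is automatically provided by the inverse-limit structure and no auxiliary reconstruction of pre-images is needed.
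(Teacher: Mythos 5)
Your proposal follows the same Katok--Avila--Crovisier--Wilkinson strategy as the paper: work in the inverse limit, build Lyapunov charts on a uniformity block $\tilde{\Lambda}(\eta)$ via Theorem~\ref{Pesin}, extract an exponentially large $(n,\rho)$-separated set of points recurring to a common small cell, concatenate the length-$n$ orbit segments into $\varepsilon$-pseudo orbits with jumps in $\tilde{\Lambda}(\eta)$, and invoke Theorem~\ref{shadowing} to obtain a coded, uniformly hyperbolic horseshoe. The small variant you propose --- fixing a single return time $n$ and intersecting $\tilde{\Lambda}(\eta)\cap\tilde{f}^{-n}\tilde{\Lambda}(\eta)$ rather than pigeonholing over first-return times in a window $[n,(1+\varepsilon_2)n]$ as in the paper's Steps~4--5 --- is sound, since $\tilde{f}$-invariance of $\tilde{\mu}$ gives this intersection measure at least $2\tilde{\mu}(\tilde{\Lambda}(\eta))-1>0$.

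The one place your argument is genuinely incomplete is conclusion~(4). You claim ``Statement (4) is immediate since every shadowing orbit lies within $C\varepsilon$ of the chosen orbit pieces, which themselves lie in $\mathrm{supp}\,\tilde{\mu}$,'' but this establishes only the inclusion $\tilde{H}\subset B_{C\varepsilon}(\mathrm{supp}\,\tilde{\mu})$. Hausdorff proximity also requires the reverse containment $\mathrm{supp}\,\tilde{\mu}\subset B_{\delta}(\tilde{H})$, i.e.\ that $\tilde{H}$ is $\delta$-dense in the support, and nothing in your selection of $E'_n$ forces this. You need to build an extra Birkhoff-type condition into the choice of $\tilde{\Lambda}_n$: fix a finite $\delta/4$-cover of $\mathrm{supp}\,\tilde{\mu}$, apply the ergodic theorem to the indicator functions of its elements, and restrict to points whose length-$n$ forward segment visits every element of the cover; then with $C\varepsilon<\delta/2$ the shadowing orbits inherit the $\delta$-density. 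This is a short addition consonant with the rest of your parameter calibration, but (4) is not ``immediate'' as stated, and without it the proof of the last conclusion is missing a direction.
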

\begin{proof}
{\bf Step 1:} For tha sake of estimating the distance of measures, we first give a finite collection of continuous functions on $\tilde{\Lambda}$ that can be used in weak $*$ metric. Let $C(\tilde{\Lambda})$ be the space of real valued continuous functions defined on $\tilde{\Lambda}$. Choose $\gamma\in(0,\delta)$ and let $\tilde{\phi}_1,\tilde{\phi}_2,\cdots,\tilde{\phi}_k$ be a finite collection of $C(\tilde{\Lambda})$ such that $\tilde{\mathcal{V}}$ contains the set of probability measures $ \tilde{\nu}$ satisfying:
$$\sum_{i=1}^{i=k}\frac{|\int_{\tilde{\Lambda}}\tilde{\phi}_i d\tilde{\mu}-\int_{\tilde{\Lambda}}\tilde{\phi}_i d\tilde{\nu}|}{2^i}<\gamma.$$

{\bf Step 2:}   
By Theorem 1.1 in \cite{Kat}, measure-theoretic entropy is the exponential growth rate of the minimal number 
of Bowen ball covering a positive measure set.  More specifically,
given $x \in \tilde{\Lambda}, \rho > 0, n\in \mathbb{N}$, the $(n,\rho)$ Bowen ball is defined as 
$$B(\tilde{x},n,\rho)=\{\tilde{y}\in \tilde{\Lambda} \,\,| \,\,d(\tilde{f}^i(\tilde{x}),\tilde{f}^i(\tilde{y}))\leq\rho,0\leq i\leq n-1\}.$$
Define 
$$N(n,\rho,\xi)=\min\sharp\{\tilde{x}_1,\cdots,\tilde{x}_k|\mu(\cup_{i}B(\tilde{x},n,\rho))>1-\xi\}.$$
For any positive number $\xi<1$,
measure entropy (which is independent of $\xi$) is defined as 
$$h_{\tilde{\mu}}(\tilde{f})=\limsup_{n\rightarrow \infty}\frac{1}{n}\log N(n,\rho,\xi) .$$
We also define $(n,\rho)$-separated set here. $S(n,\rho)$ is called a $(n,\rho)$-separated set for set $K$ if for any point $\tilde{x}\in K\subset \tilde{\Lambda}$, there exists a point $\tilde{y}\in S(n,\rho)$ such that $d(\tilde{f}^i(x),\tilde{f}^j(x))\geq \rho,$ for some $i\in [0,n-1].$
We will take advantage of the fact that the maximal number of $(n,\rho)$-separated set is bigger than the 
minimal number of  $(n,\rho)$ Bowen balls covering the same set.   

{\bf Step 3:}
Now we are ready to give the scale of Bowen balls and the scale of separation, i.e. $\rho$ in the definition of the Bowen ball and the separated set.  
Assume $0<\varepsilon_1<\min(\frac{\gamma}{2h_{\tilde{\mu}}\tilde{f}+4},\frac{\delta}{4})$. Choose $\rho>0$ small enough and $N_0\in \mathbb{N}$  such that for any $n\geq N_0$
$$ N(n,\rho,\tilde{\mu}(\tilde{\Lambda}(\eta))/2)> e^{n(h_{\tilde{\mu}}(\tilde{f})-\varepsilon_1)}$$
and such that for any $d(x,y)\leq \rho,\forall x,y\in \tilde{\Lambda}$, $$|\tilde{\phi}_i(x)-\tilde{\phi}_i(y)|\leq \frac{\gamma}{2}, 1\leq i\leq k.$$
The small number $\rho$ here is a separation scale.

{\bf Step 4:}
We next give a shadowing scale which is smaller than the separation scale $\rho$ given above. We also filter  recurrence points  for   $\tilde{\Lambda}(\eta)$.
Fix $0<\varepsilon_2<\frac{\rho}{4C}$, where $C$ is the Lipschitz constant given in Theorem \ref{shadowing}. 
Let $\mathcal{U}=\{B(\tilde{x}_i,\varepsilon_2)\,\,|\,\, \tilde{x}_i\in\tilde{\Lambda}(\eta), 1\leq i\leq t\}$ be a cover of $\tilde{\Lambda}(\eta)$. 

 For any $n\in\mathbb{N}$, let
\begin{eqnarray*}
\tilde{\Lambda}'(\eta)_n&=&\{\tilde{x}\in\tilde{\Lambda}(\eta):\exists i\in[n,(1+\varepsilon_2)n] \\
&& s.t.\,\, \tilde{x},\tilde{f}^i\tilde{x}\in B(\tilde{x}_k,
\varepsilon_2), for\,\, some \,\,k\in[1,t] \}.
\end{eqnarray*} 
By Poincare recurrence theorem, $\tilde{\mu}(\tilde{\Lambda}'(\eta)_n)\rightarrow \tilde{\mu}(\tilde{\Lambda}(\eta))$, as $n\rightarrow +\infty$. 
Set $$\tilde{\Lambda}(\eta)_n=\{\tilde{x}\in\tilde{\Lambda}'(\eta)_n: sup_{l\geq n}\max_{1\leq i\leq k}|\frac{1}{l}\sum^{l}_{j=1}\tilde{\phi}_i(\tilde{f}^j(\tilde{x}))-\int_{\tilde{\Lambda}}\phi_i d _{\tilde{\mu}}|<\frac{\gamma}{2}\}. $$
 Birkhoff Ergodic Theorem implies that $\tilde{\mu}(\tilde{\Lambda}(\eta)_n)\rightarrow \tilde{\mu}( \tilde{\Lambda}(\eta))$, as $n\rightarrow \infty.$

{\bf Step 5:} 
We  chose $(n,\rho)$-separated set covering $\tilde{\Lambda}(\eta)$ in this step.  We need not only separation property but also a common return time to $\tilde{\Lambda}(\eta)$, so that we can control the segments.   We  also use some combinatory techniques to estimate the number of $(n,\rho)$-separated segments with the common return time to $\tilde{\Lambda}(\eta)$. 

For each $n\in\mathbb{N}$, let $S(n,\rho)$ be a maximal $(n,\rho)$-separated set in $\tilde{\Lambda}(\eta)_n$. Without of loss of generality, we can assume that each two points in $S(n,\rho)$ come from different orbits (if there are two points in the same orbit, just give a small perturb of it).  
 Then,
$$\tilde{\Lambda}(\eta)_n\subset\cup_{\tilde{x}\in S(n,\rho)}B(\tilde{x},n,2\rho)$$
and for $N_1$ large enough such that for any $n\geq N_1$
we get $$\sharp S(n,\rho)\geq N(n,2\rho, \tilde{\mu}(\tilde{\Lambda})/2)\geq e^{n(h_{\tilde{\mu}}(\tilde{f})-\varepsilon_1)}. $$ 
For $n\in[N_1,(1+\varepsilon_2)N_1]$, let 
$$V_n=S(N_1,\rho)\cap \{\tilde{x}\in \tilde{\Lambda}(\eta)| \tilde{x}, \tilde{f}^n(\tilde{x})\in B(\tilde{x}_k,\varepsilon_2), for\, some\, k\in[1,t]\}$$
and let $N\in[N_1,(1+\varepsilon)N_1]$ be the value of $n$ maximizing $\sharp V_n$. Assuming $N_1$ large enough,
$$\sharp V_N\geq\frac{S(N_1,\rho)}{\varepsilon N_1}\geq e^{N(h_{\tilde{\mu}}(\tilde{f})-2\varepsilon_1)}.$$

{\bf Step 6:}
Now we have had lots of separated segments with common return time. But in order to construct  pseudo orbits from the segments of these separated points, we need to chose separated segments coming in to and getting away from the same ball.

Chose $k\in[1,t]$ such that $B(\tilde{x}_k,\varepsilon_2)\cap V_n$ has maximal cardinality and let
 $\tilde{Y}=\{\tilde{y}_1,\cdots,\tilde{y}_l\}=B(\tilde{x}_k,\varepsilon_2)\cap V_N$. From the choice of $\tilde{Y}$, it is natural that $l$ is large with respect to entropy, i.e.
$$l\geq \frac{\sharp V_N}{t}\geq \frac{1}{t}e^{N(h_{\tilde{\mu}}(\tilde{f})-2\varepsilon_1)}.$$

{\bf Step 7:} Now we can construct pseudo orbits.
Consider the set of all orbits whose segments of length $N$  originate in $\tilde{Y}$ and end in $B$. Concatenating these strings defines a 
two sided shift $\sigma_l$ based on $l$ symbols, which has topological entropy $\log l\geq N(h(\tilde{\mu},\tilde{f})-2\xi)-\log t.$ We will construct a horseshoe $\tilde{H}
\subset M^f$ such that $\tilde{f}^N|_{\tilde{H}}$ has $\sigma_l$ as a topological factor. Considering the set $\tilde{\mathcal{Y}}$ of all $\varepsilon_2-$pseudo orbits of the form:
$$\cdots\tilde{y}_{i_{-1}}, \cdots, \tilde{f}^{N-1}(\tilde{y}_{i_{-1}}),\tilde{y}_{i_{0}}, \cdots, \tilde{f}^{N-1}(\tilde{y}_{i_{0}}),\tilde{y}_{i_{1}}, \cdots, \tilde{f}^{N-1}(\tilde{y}_{i_{1}}),\cdots$$
where $\tilde{y}_{i_j}\neq\tilde{y}_{i_{j+1}}\in \tilde{\mathcal{Y}}.$  Note that these are also $\varepsilon_2-$pseudo orbits with jumps in $\tilde{\Lambda}(\eta),$ since $\tilde{f}^N(\tilde{y})\in \tilde{\Lambda}(\eta)$, for all $\tilde{y}\in\tilde{\mathcal{Y}}$. Each element of $\tilde{\mathcal{Y}}$ can be naturally encoded as an element of $\{1,\cdots,l\}^{\mathbb{Z}}\times\{0,\cdots, N-1\}$. 
We define $\tilde{H}$ to be the set of $\tilde{x}\in M^f$ whose $\tilde{f}-$orbit $C\varepsilon_2-$shadowing some pseudo orbits in $\tilde{\mathcal{Y}}$. 

Denote a  Markovian subshift $\mathcal{C}$ of shift $\{1,\cdots,l\}^{\mathbb{Z}}\times\{0,\cdots, N-1\}$ with the Markovian graph derived from the graph of $\{1,\cdots,l\}^{\mathbb{Z}}\times\{0,\cdots, N-1\}$ by dropping the chains from one vertex to itself.  
 Theorem \ref{shadowing} and $\rho>\frac{C\varepsilon_2}{4}$ imply there is a continuous bijection between $\tilde{H}$ and $\mathcal{C}$. 

Hyperbolicity of $\tilde{H}$ follows from the fact that the orbit of any $\tilde{x}\in \tilde{H}$ stays in the union of finitely many regular neighborhoods, on which $f$ stays close to a uniformly hyperbolic sequence in $\{A_{\epsilon}(\tilde{x}_{j_1}), A_{\epsilon}(\tilde{x}_{j_2}),\cdots,A_{\epsilon}(\tilde{x}_{j_N})\}$. Other conclusions follows from our construction directly.  Thus we finish the proof. 
\end{proof}
\begin{Cor}\label{Growth}Let $f:M\rightarrow M$ be a  $\mathcal{C}^{1+\alpha}$ mapping   preserving an ergodic
hyperbolic probability measure $\mu$.  We have 
$$\limsup_{n\rightarrow+\infty}\frac{1}{n}\log P_n(f)\geq h_{\mu}(f)$$
where $P_n(f)$ denotes the number of periodic points with  period $n$.
\end{Cor}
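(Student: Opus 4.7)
The strategy is to reduce the corollary to Theorem~\ref{Kat} by first producing horseshoes in the inverse limit space with nearly maximal entropy, counting their periodic orbits, and then projecting these orbits down to $M$ via $\pi$.

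Fix an arbitrary $\delta>0$. First I would invoke Theorem~\ref{Kat} to obtain a compact $\tilde{f}$-invariant set $\tilde{H}\subset M^f$ on which $\tilde{f}$ is topologically conjugate (via a homeomorphism $h$) to a subshift of finite type $\sigma:\mathcal{A}^{\mathbb{Z}}\to \mathcal{A}^{\mathbb{Z}}$ with $h_{top}(\tilde{H},\tilde{f})>h_{\tilde{\mu}}(\tilde{f})-\delta=h_\mu(f)-\delta$. For any subshift of finite type the exponential growth rate of the number of periodic orbits of period $n$ equals the topological entropy; in particular
\[
\limsup_{n\to\infty}\frac{1}{n}\log \#\{\tilde{x}\in\tilde{H}:\tilde{f}^n\tilde{x}=\tilde{x}\}\;\geq\; h_{top}(\tilde{H},\tilde{f})\;>\;h_\mu(f)-\delta.
\]

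Next I would transfer these periodic points back to $M$ via the natural projection $\pi:M^f\to M$, $\pi((x_i))=x_0$. The crucial observation is that $\pi$ is injective on the set of periodic points of $\tilde{f}$: if $\tilde{f}^n\tilde{x}=\tilde{x}$, then the sequence $(x_i)_{i\in\mathbb{Z}}$ is $n$-periodic, so every negative-index coordinate $x_{-k}$ coincides with $x_{n-k}$ for some $0\le n-k<n$ and is therefore determined by the forward $f$-orbit of $x_0=\pi(\tilde{x})$. Consequently, distinct periodic points of $\tilde{f}$ of period $n$ project to distinct periodic points of $f$ of period $n$. Combined with the relation $f\circ\pi=\pi\circ\tilde{f}$, this yields $P_n(f)\ge \#\{\tilde{x}\in\tilde{H}:\tilde{f}^n\tilde{x}=\tilde{x}\}$, and passing to the $\limsup$ gives
\[
\limsup_{n\to\infty}\frac{1}{n}\log P_n(f)\;\geq\;h_\mu(f)-\delta.
\]
Since $\delta>0$ was arbitrary, the desired inequality follows.

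The substantive content of the argument is entirely contained in Theorem~\ref{Kat}; the only things to verify in the present corollary are the two essentially combinatorial facts that (i) subshifts of finite type have periodic orbit growth rate equal to their topological entropy, and (ii) the projection $\pi$ is injective on periodic points of $\tilde{f}$. Neither step is an obstacle: (i) is classical (every allowed periodic word of length $n$ yields a periodic point of $\sigma$, and the number of allowed words of length $n$ grows at rate $h_{top}(\sigma)$), and (ii) is the elementary observation above about periodic sequences in the inverse limit. The possible subtlety is only of the bookkeeping kind: one should note that a $\tilde{f}$-periodic point of period $n$ in $\tilde{H}$ projects to an $f$-periodic point whose period divides $n$, which is exactly what the quantity $P_n(f)$ counts, so no loss occurs.
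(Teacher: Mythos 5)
Your proof is correct and follows essentially the same route as the paper: invoke Theorem~\ref{Kat} to produce a horseshoe $\tilde{H}\subset M^f$ conjugate to a subshift whose periodic-orbit growth rate equals $h_{top}(\tilde H,\tilde f)>h_\mu(f)-\delta$, then project to $M$. The paper's proof is terser and does not spell out the injectivity of $\pi$ on $\tilde f$-periodic points, which you verify explicitly; that is a genuine (if elementary) gap you have filled rather than a different method.
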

\begin{proof}For symbolic systems $(\sigma,\Sigma_{l})$
$$h_{top}(\Sigma_l,\sigma)=\frac{1}{m}\log \sharp\{x\in\Sigma_{l}|\sigma^m(x)=x\}.$$
Then, by Theorem (\ref{Kat}), the corollary follows. 
\end{proof}


\begin{Def}\label{PH}For map $f:M\rightarrow M$ and any constants $K>0, a>0$, we say periodic point $p$ with period $P(p)$ has $(K,a)$-hyperbolicity if there exists an invariant splitting 
$T_{f^i(p)}M=E^s_{f^i(p)}\oplus E^u_{f^i(p)}, 0\leq i\leq P(p)-1$
along orbit $\{f^i(p)\}_{i=0}^{P(p)-1}$ such that 
$$||Df^j_{f^i(p)(v)}||\leq Ke^{-ja}||v||, \forall v\in E^s_{f^i(p)}$$
and  $$||Df^j_{f^i(p)(v)}||\geq Ke^{ja}||v||, \forall v\in E^u_{f^i(p)}.$$
Let $PH(n,f,K,a)$ be  the collection of periodic points with period $n$ and uniform $(K,a)$-hyperbolicity.
\end{Def}
\begin{Thm} \label{Growth2} For any  $\mathcal{C}^{1+\alpha}$ mapping $f:M\rightarrow M$,  we have 
$$\limsup_{n\rightarrow+\infty}\frac{1}{n}\log P_n(f)\geq H(f).$$
 Moreover,   we have $$\lim_{a\rightarrow 0^{+}}\lim_{K\rightarrow 0^{+}}\limsup_{n\rightarrow \infty}\frac{1}{n}\log \sharp PH(n,f, K,a)=H(f).$$
\end{Thm}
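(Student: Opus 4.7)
The plan is to split the conclusion into the first (straightforward) inequality and the equality, proving matching lower and upper bounds for the double limit.

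The first inequality is an immediate consequence of Corollary \ref{Growth}: for every $\mu\in\mathcal{HM}$ one has $\limsup_{n\to\infty}\tfrac{1}{n}\log P_n(f)\geq h_\mu(f)$, so taking the supremum over $\mathcal{HM}$ places $H(f)$ on the right-hand side.

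For the lower bound on the double limit, I fix $\mu\in\mathcal{HM}$ with $h_\mu(f)>0$ and $\delta>0$, and apply Theorem \ref{Kat} to obtain a horseshoe $\tilde H\subset M^f$ with $h_{top}(\tilde H,\tilde f)>h_\mu(f)-\delta$ carrying a uniform dominated splitting $E^u\oplus E^s$ with the $N$-step hyperbolicity of property (2). By subdividing iteration into $N$-blocks and absorbing the block constants into a single $K_0$, these estimates can be repackaged as one-step $(K_0,a_0)$-hyperbolicity with $a_0$ slightly smaller than $\tilde\lambda-\delta$, valid along every periodic orbit inside $\tilde H$. Since $\tilde f|_{\tilde H}$ is topologically conjugate to a subshift of finite type, the number of its period-$n$ points grows at rate $h_{top}(\tilde H,\tilde f)$, and each such orbit projects to an element of $PH(n,f,K_0,a_0)$. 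This yields $\limsup_n\tfrac{1}{n}\log \sharp PH(n,f,K_0,a_0)\geq h_\mu(f)-2\delta$; letting $\delta\to 0$, taking the supremum over $\mu\in\mathcal{HM}$, and passing to the double limit in $K,a$ delivers $\geq H(f)$.

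For the upper bound, let $\tilde\Lambda_{K,a}\subset M^f$ denote the closure of the union of full-orbit lifts of all $(K,a)$-hyperbolic periodic points; it is compact and $\tilde f$-invariant. The uniform $(K,a)$-hyperbolic splittings along the dense set of periodic orbits determine stable/unstable cone fields whose defining inequalities are closed conditions, so they pass to Hausdorff limits and endow $\tilde\Lambda_{K,a}$ with a uniformly hyperbolic dominated splitting. Consequently every ergodic $\tilde f$-invariant measure on $\tilde\Lambda_{K,a}$ projects via $\pi$ to a hyperbolic measure in $\mathcal{HM}$, and the variational principle combined with the entropy identity of Lemma \ref{EM} gives
$$h_{top}(\tilde f|_{\tilde\Lambda_{K,a}})=\sup_{\tilde\nu}h_{\tilde\nu}(\tilde f)=\sup_{\pi_*\tilde\nu}h_{\pi_*\tilde\nu}(f)\leq H(f).$$
Since $\tilde\Lambda_{K,a}$ is expansive and satisfies the shadowing property, the exponential growth rate of its periodic points coincides with $h_{top}(\tilde f|_{\tilde\Lambda_{K,a}})$, so $\limsup_n\tfrac{1}{n}\log \sharp PH(n,f,K,a)\leq H(f)$; this bound persists through both limits.

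The main obstacle will be justifying that $\tilde\Lambda_{K,a}$ is genuinely uniformly hyperbolic in the inverse-limit setting: because $f$ is non-invertible, the unstable direction attached to a point in $M^f$ depends on the full backward history, and one must assemble continuous cone fields on $\tilde\Lambda_{K,a}$ by passing $(K,a)$-cone conditions through Hausdorff limits in $M^f$. Working in $M^f$ rather than $M$ is essential here since $\tilde f$ is a homeomorphism and $D\tilde f$ transports cones cleanly. A secondary, routine technicality is converting the $N$-step estimates of Theorem \ref{Kat}(2) into one-step $(K,a)$-hyperbolicity, handled by the standard adapted-metric trick on $N$-blocks.
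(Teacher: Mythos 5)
Your overall strategy mirrors the paper's: the first inequality is an instance of Corollary \ref{Growth}, the lower bound on the double limit comes from the horseshoe of Theorem \ref{Kat}, and the upper bound comes from taking a closure of periodic orbit lifts in $M^f$, arguing uniform hyperbolicity there, and then combining expansiveness, separated-set estimates, and the variational principle with Lemma \ref{EM}. The lower-bound step, including the repackaging of $N$-step estimates into one-step $(K_0,a_0)$-hyperbolicity, is sound and matches what the paper does implicitly.

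The genuine gap is in your upper-bound claim that ``the uniform $(K,a)$-hyperbolic splittings along the dense set of periodic orbits determine stable/unstable cone fields whose defining inequalities are closed conditions, so they pass to Hausdorff limits and endow $\tilde\Lambda_{K,a}$ with a uniformly hyperbolic dominated splitting.'' This cannot be carried out as stated, because $PH(n,f,K,a)$ can contain periodic points with different stable indices: a cone field of a fixed core dimension cannot be defined across a set on which $\dim E^s$ jumps, and a Hausdorff limit of $(K,a)$-splittings only makes sense along a subsequence of constant index. The paper avoids this precisely by introducing $PH(n,f,a,K,I)$ with the index $I$ fixed, proving uniform hyperbolicity and expansiveness of $\tilde f$ on each closure $\overline{\cup_n PH(n,f,a,K,I)}$, and then summing over the finitely many values $I\in\{0,\dots,d\}$. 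To repair your argument you should decompose $\tilde\Lambda_{K,a}$ into the (at most $d+1$) index pieces, apply your cone-field/variational-principle reasoning to each piece, and observe that the finite union does not affect the exponential growth rate. Without this decomposition, the steps that rely on a single uniform splitting (uniform hyperbolicity of $\tilde\Lambda_{K,a}$, hyperbolicity of every ergodic measure on it, and expansiveness) are not justified. By contrast, the ``main obstacle'' you flag, the dependence of $E^u$ on the full backward history in the non-invertible setting, is a valid concern but is handled identically in the paper by working in $M^f$; the missing index decomposition is the more substantive omission.
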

\begin{proof}The first part is a direct corollary.  We only need to prove the second equality. 
As periodic points for $f$ can also be viewed as periodic points for $\tilde{f}$, we use the same notation. Assume the Lyapunov splitting over the orbit of  periodic point $p$ with period $P(p)$ is 
$$T_{f^i(p)}M=E^s_{f^i(p)}\oplus E^u_{f^i(p)}, \forall 0\leq i\leq P(p)-1. $$
Let $I(p)$ be the index of $p$, i.e. $I(p)$ is the dimension of stable bundle $E^s$. 
We define the following collection of periodic points with uniform hyperbolicity and the same index,
\begin{eqnarray*}PH(n,f,a, K, I)&=&\{p\in P_n(f)| ||Df^{i}_{f^j(p))}v||\geq  K e^{ia}||v||, \forall v\in E^u_{f^j(p)},\\
&&||Df^{i}_{f^j(p))}u||\leq K^{-1}e^{-ia}||u||, \forall u\in E^s_{f^j(p)},  \\
&&\forall 0\leq j\leq n-1, I(p)=I\}.
\end{eqnarray*}
As the splitting on periodic points can be continuously extended to the closure set, $\tilde{f}|(\overline{\cup_nPH(n,f,a,K,I)})$ is uniformly hyperbolic. 
 Thus $\tilde{f}|\overline{\cup_nPH(n,f,a,K, I)}$ is an expansive (from unique shadowing property) homoeomorphism and then $f|\pi(\overline{\cup_nPH(n,f,a,K,I)})$ is an expansive map. From the fact that  $\pi(PH(n,f,a,K, I))$ is a n-separated  set, one has $$ \lim_{n\rightarrow+\infty}\frac{1}{n}\log \sharp PH(n,f,a,K,I) \leq h(f|\overline{\pi(\cup_nPH(n,f,a,K, I)))}).$$   From the principle of variation, for any $\varepsilon>0$, there exists a hyperbolic measure $\mu$ supported on $\overline{\pi(\cup_nPH(n,f,a,K,I))}$ such that $h(f|\overline{\pi(\cup_nPH(n,f,a,K,I))})\leq h_{\mu}(f)+\varepsilon\leq H(f)+2\varepsilon. $ From the arbitrary choice of $\varepsilon$, we obtain 
 $$\limsup_{n\rightarrow+\infty}\frac{1}{n}\log \sharp PH(n,f,K,a,I)\leq H(f).$$ Since $$PH(n,f,K,a)=\cup_{0\leq I\leq d}\cup_{K>0}PH(n,f,a,K,I),$$ 
 we have $$\lim_{k\rightarrow 0^{+}}\lim_{n\rightarrow \infty}\frac{1}{n}\log \sharp PH(n,f,K,a)\leq H(f). $$
 On the other hand, 
 by Theorem \ref{Kat},  we have 
 $$\lim_{a\rightarrow 0^{+}}\lim_{K\rightarrow 0^{+}}\lim_{n\rightarrow \infty}\frac{1}{n}\log \sharp PH(n,f,K,a)\geq H(f),$$
 and then
 $$\lim_{a\rightarrow 0^{+}}\lim_{K\rightarrow 0^{+}}\lim_{n\rightarrow \infty}\frac{1}{n}\log \sharp PH(n,f,K,a)=H(f). $$
  \end{proof}

\section{Relation between exponential growth rate of periodic points and degree}
Asymptotic growth rate of  the complexity of the orbit structure attracts people's attention for a long time. There are several points of view to describe asymptotic behaviors of dynamical systems, such as topology, measure, homology, etc.  Commonly, one cares about the growth rate of the number of  periodic points, measure-theoretic entropy, topological entropy and the spectral radii of the action on homology, etc. 

Let $M$ be a compact connected $d-$dimensional manifold. For $\mathcal{C}^1$ mapping $f:M\rightarrow M$,   Misiurewicz and Pryztycki \cite{MP} proved that \[h_{top}(f)\geq \log |deg(f)|.\] 
Let $P_n(f)$ denotes the number of periodic points with period $n$.  Katok \cite{Kat} proved that for any $\mathcal{C}^{\infty}$ surface diffeomorphism $f:M\rightarrow M$ we have 
\[\limsup_{n\rightarrow+\infty}\frac{1}{n}\log P_n(f)\geq h_{top}(f). \]
Inspired by these two results,  Shub posed an  interesting case in the problem 3 of \cite{Shub3}.  Let $f$  be a smooth degree two $C^{1+\alpha}$ map on  2-shpere $S^2$ where $\alpha>0$. 

\smallskip
{\bf Problem (Shub) :} Does $\limsup_{n\rightarrow +\infty}\frac{1}{n}\log P_n(f)\geq \log2$ hold?
\smallskip

In order to get periodic points, a usual technique is the closing lemma used by Katok in \cite{Kat} which is based on the hyperbolicity of invariant measures. If we assume there exists a hyperbolic invariant ergodic measure $\mu$ of $\mathcal{C}^{1+\alpha}$ map $f$ with $h_{\mu}(f)\geq \log deg(f)$, then  from Corollary \ref{Growth} we get 
$\limsup_{n\rightarrow +\infty}\frac{1}{n}\log P_n(f)\geq \log \text{deg}(f).$
 There is also a direct corollary from  Corollory \ref{Growth2} as follows. 
\begin{Cor} \label{Cor}Let $M$ be a compact connected $d-$dimensional manifold. For any $\mathcal{C}^{1+\alpha}$ mapping $f:M\rightarrow M$, if   $H(f)\geq \log \text{deg}(f)$. Then we have 
$$\limsup_{n\rightarrow+\infty}\frac{1}{n}\log P_n(f)\geq \log \text{deg}(f).$$
\end{Cor}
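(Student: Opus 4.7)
The plan is entirely a chaining argument: both ingredients required are already in place in the paper, and the corollary is a bookkeeping consequence. First I would invoke Theorem \ref{Growth2}, which gives, for every $\mathcal{C}^{1+\alpha}$ mapping $f:M\to M$, the periodic-point growth estimate
$$\limsup_{n\to+\infty}\frac{1}{n}\log P_n(f)\;\geq\;H(f).$$
This is the substantive input; its proof (via the horseshoe construction of Theorem \ref{Kat}, the shadowing lemma for sequences of mappings, and Pesin--Oseledec reduction in the inverse limit space $M^f$) already packages all of the dynamical content. Next I would feed in the standing hypothesis $H(f)\geq \log\deg(f)$ of the corollary and concatenate the two inequalities to obtain the desired
$$\limsup_{n\to+\infty}\frac{1}{n}\log P_n(f)\;\geq\;\log\deg(f).$$

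There is no hard step and, strictly speaking, no obstacle internal to the proof. The only thing worth emphasizing in the write-up is interpretive: the hypothesis $H(f)\geq\log\deg(f)$ is precisely the bridge that converts the Misiurewicz--Pryztycki topological bound $h_{top}(f)\geq \log|\deg(f)|$---which a priori might be carried by invariant measures having some zero Lyapunov exponent and hence not be accessible to horseshoe methods---into a bound witnessed by a hyperbolic ergodic measure, and therefore by horseshoes and their periodic orbits. Phrased differently, the corollary isolates the exact numerical condition under which Shub's periodic-point question on degree-$d$ self-maps reduces cleanly to the machinery of Section~3.

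If any genuine difficulty is to be flagged, it is \emph{external} to the present argument, namely verifying the hypothesis $H(f)\geq \log\deg(f)$ in concrete examples such as $\mathcal{C}^{1+\alpha}$ degree-two maps of $S^2$; that question is the actual content of Shub's problem and is not addressed by this corollary. Accordingly, I would keep the written proof to a single short paragraph that cites Theorem~\ref{Growth2} and applies the hypothesis, and place any commentary about the relation to the Misiurewicz--Pryztycki inequality and to Shub's problem in the surrounding discussion rather than inside the proof itself.
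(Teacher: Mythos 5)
Your proposal is correct and matches the paper's own treatment: the paper states this result as an immediate consequence of Theorem~\ref{Growth2}, and your argument is exactly that chaining, namely $\limsup_{n}\frac{1}{n}\log P_n(f)\geq H(f)\geq\log\deg(f)$. Your surrounding remarks on Misiurewicz--Przytycki and Shub's problem also mirror the discussion the paper itself places around the corollary.
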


We give some notes for the  case when $M$ is a surface.   It is easy to see that if $f$ is a diffeomorphism, then every invariant measure with positive entropy is hyperbolic. But this might not be true for endmomorphisms. For noninvertible mappings on surface, one might can not get hyperbolic invariant measures with  measure-theoretic entropy approximating topological entropy.  It may usually happen that for noninvertible mapping $f$,  $$H(f)<h_{top}(f), $$ 
such as  examples given by  Pugh and Shub in their new paper \cite{Shub4}. In other words, from the equality in Theorem \ref{Growth2}, it is highly possible that  the growth rate  of  the number of saddle periodic points is strictly smaller than topological entropy.   But it is still possible that the growth rate of  the number of periodic points with zero Lyapunov exponents is greater than degree.  One  may need some topological techniques to get Shub's question.

\end{document}